\def\a{\mathbf{a}}
\def\b{\mathbf{b}}
\def\f{\mathbf{f}}
\def\m{\mathbf{m}}
\def\n{\mathbf{n}}
\def\N{\mathbb{N}}
\def\nuu{\boldsymbol{\nu}}
\def\etta{\boldsymbol{\eta}}
\def\omm{\boldsymbol{\omega}}
\def\zetta{\boldsymbol{\zeta}}
\newtheorem{theorem}{\hspace*{\parindent}Theorem}
\newtheorem{proposition}{\hspace*{\parindent}Proposition}
\newtheorem*{theorem*}{\hspace*{\parindent}Theorem}
\newtheorem{lemma}{\hspace*{\parindent}Lemma}
\newtheorem{corollary}{\hspace*{\parindent}Corollary}
\newtheorem{open problem}{\hspace*{\parindent}{Open problem}}
\newtheorem{example}{\hspace*{\parindent}{Example}}
\DeclareMathOperator*{\sign}{sign}
\title{Generalized $f$-Eulerian polynomials: zeros and hypergeometric representations with applications}
\author{Dmitrii\:Karp$^{\rm a,b}$\footnote{Corresponding author.  E-mail: D. Karp -- \emph{dimkrp@gmail.com}; A.Vishnyakova -- \emph{annalyticity@gmail.com}}~~and Anna\:Vishnyakova$^{\rm a}$
	\\[10pt]
	\small{\textit{$\phantom{1}^a$Holon Institute of Technology, Holon, Israel}}
    \\[10pt]
	\small{\textit{$\phantom{1}^b$Institute of Mathematics and Informatics, BAS, Bulgaria}}
}
\date{}
\begin{document}
	\maketitle
	
	\begin{abstract}
		In this paper, we explore (slightly generalized) $f$-Eulerian polynomials introduced by Stanley and frequently appearing in combinatorics. Notable special cases include the classical Eulerian polynomials, the generating polynomials of order polynomials for certain labeled posets, and the $d$-Narayana polynomials. We establish simple sufficient conditions  for the reality (and sign) of their zeros and present implications for total positivity of sequences generated by values of polynomials at integers. We further relate these polynomials to generalized Euler's transformations for the generalized hypergeometric functions with integral parameter differences. Exploiting this and other hypergeometric connections, we provide purely hypergeometric proofs for various known and some new properties of $d$-Narayana polynomials. Another family encompassed by our definition of the generalized $f$-Eulerian polynomials is that of Jacobi-Pi\~neiro type II multiple orthogonal polynomials. Their zero location can thus be analyzed, for both canonical and non-canonical parameter values, without invoking orthogonality.  Finally, we present several connection formulas relating $d$-Narayana polynomials to particular Jacobi-Pi\~neiro polynomials.
	\end{abstract}
	
	\bigskip
	
	Keywords: \emph{$f$-Eulerian polynomials, $d$-Narayana polynomials, Miller-Paris transformations, Jacobi-Piñeiro polynomials, real zeros}
	
	\bigskip
	
	MSC2020: 05A15, 05A19, 26C10, 30C15, 12D10, 33C20, 33C47, 11B83, 15B48
	
	\bigskip
	
	\section{Introduction and preliminaries}
	Suppose $f$ is a polynomial of degree $d$.  The function $\hat{w}$ defined by the series
	\begin{equation}\label{eq:f-Euler}
		\sum\limits_{n=0}^{\infty}f(n)x^n=\frac{\hat{w}(x)}{(1-x)^{d+1}}
	\end{equation}
	is known (and easily seen) to be a polynomial of degree at most $d$ \cite[Lemma~3.15.11]{StanleyBook1}, frequently called $f$-Eulerian polynomial \cite[section 4.3, p.\,473]{StanleyBook1}. The name is explained by the observation that for $f(n)=n^d$, we obtain $\hat{w}(x)=A_{d}(x)$, where $A_d(x)=\sum_{k=1}^{d}A(d,k)x^k$ is the Euler polynomial generated by the Euler numbers $A(d,k)$ counting the number of permutations of $\{1,2,\ldots,d\}$  with exactly $k-1$ descents, i.e. pairs $\sigma(i)>\sigma(i+1)$. If $f(n)=\binom{n+d-i}{d}$ (the multiset coefficient),  we obtain the monomials: $\hat{w}(x)=x^i$.  Another example of combinatorial significance is the generating polynomials of the order polynomials $\Omega_{P,\omega}(m)$ of  labeled posets, see \cite[Theorem~3.15.8]{StanleyBook1} and the text preceding it.

	One further example, we will discuss in a greater detail in Section~4, is $d$-Narayana polynomials defined by Robert Sulanke \cite{Sulanke2004}. They have several interpretations in combinatorics and, more recently, in the study of Grassmanians \cite{Braun2019}, explained in Section~4.  Sulanke's results imply that the $d$-Narayana polynomials $N_{d,m}$ satisfy 
\begin{equation}\label{eq:CiglerF}
		{}_dF_{d-1}\!\left(\!\!\begin{array}{c}m+1,m+2,\ldots,m+d\\2,3,\ldots,d\end{array}\vline\:x\right)=\frac{N_{d,m}(x)}{(1-x)^{md+1}},
\end{equation}
which is a particular case of \eqref{eq:f-Euler}.  Here and below ${}_{p}F_{q}$ stands for the generalized hypergeometric function \cite[Chapter~16]{NIST}. 
 Chen, Yang and Zhang proved in \cite[Theorem 3.1]{CYZ} that all zeros of  $N_{d,m}(x)$  are real (and negative) resolving and strengthening a previous conjecture by Kirillov claiming that the sequence of $d$-Narayana numbers (i.e. the coefficients of $N_{d,m}(x)$) is log-concave.  Their proof uses bijection with order polynomials of column strict Ferrers posets whose zeros have been previously proved to be real by Brenti \cite[proof of Theorem~5.3.2]{BrentiMemoir} settling a particular case of Neggers-Stanley conjecture.  For general $f$-Eulerian polynomial $\hat{w}(x)$ from \eqref{eq:f-Euler}, Brenti gave the following sufficient condition for their zeros to be real and non-positive \cite[Theorem~3.7]{BrentiContMath}, \cite[Theorem~4.4.1]{BrentiMemoir}: suppose $f$ is a polynomial with real coefficients having only real zeros and $f(x)=0$ for each integer $x$ in the set $[\lambda(f),-1]\cup[0,\Lambda(f)]$, where $\lambda(f)$ ($\Lambda(f)$) stands for the smallest (largest) zero of $f$, then  $\hat{w}(x)$ has  non-negative coefficients and  only  real zeros.  This theorem was applied by Agapito \cite{Agapito} to a subset of $f$-Eulerian polynomials $P_N(x)$ defined for given integers $1\le{r}\le{s}$, $n\ge1$, by the relation \cite[Definition~3.4]{Agapito}
	$$
	\bigg[x^r\Big(\frac{d}{dx}\Big)^s\bigg]^n\frac{1}{1-x}=\frac{K(r,s,n)x^{r-1}P_{N}(x)}{(1-x)^{ns+1}},
	$$
	where $N=r(n-1)+1$. The Euler polynomials $A_n(x)$ correspond to $r=s=1$, while $d$-Narayana polynomials appear by setting $s=d$, $r=d-1$, $m=n$ \cite[Remark~3.3]{Agapito}.  In  \cite[Theorem~3.13]{Agapito} Agapito applied Brenti's result to show that $P_N(x)$ has only real non-positive zeros, a fact apparently overlooked by the authors of \cite{CYZ}.  He further established positivity of the so-called $\gamma$ coefficients of $P_N$, see \cite[section~4.2]{Agapito}.  We will use the hypergeometric definition \eqref{eq:CiglerF} to establish further properties of $d$-Narayana polynomials, namely, a formula for the value $N_{d,m}(1)$ known as multidimensional Catalan number, the self-reciprocity, expansions in monomial and Bernstein bases.  Reality and negativity of zeros of $N_{d,m}(x)$ will also follow from our results. These investigations were motivated by a question that Johann Cigler asked the first author in a letter dated Fabruary 28, 2022. 
	
In this paper, we will examine a slightly more general class of polynomials than \eqref{eq:f-Euler} generated by 
\begin{equation}\label{eq:fa-Euler}
		F\!\left(\!\!\begin{array}{c}a\\-\end{array}\bigg\vert\,F_m\,\bigg\vert\,x\right):=\sum\limits_{n=0}^{\infty}F_m(n)\frac{(a)_n}{n!}x^n=\frac{\hat{w}(x)}{(1-x)^{m+a}},
\end{equation}
where $a$ can be any real number, $(a)_k=\Gamma(a+k)/\Gamma(a)$ is Pochhammer's symbol,  and $F_m(n)$ stands for a polynomial in $n$ of degree $m$ normalized by $F_m(0)=1$  (if $F_m(0)=0$ one can cancel the factor $n$ in $F_m$ with the same factor in $n!$ and shift the summation index). If $a$ is a positive integer, then \eqref{eq:fa-Euler} is easily seen to reduce to \eqref{eq:f-Euler}, as $(a)_n/n!$ becomes a polynomial in $n$.  For general real $a$, the form \eqref{eq:fa-Euler} is a true generalization of \eqref{eq:f-Euler}. We will frequently write
$$
\hat{w}(x)=\hat{w}\big(a;F_m\,\big\vert\,x\big)
$$
to emphasize the dependence of $\hat{w}$ on $a$ and $F_m$. The polynomial $F_m$ can always be written in the form 
\begin{equation}\label{eq:Fm-defined}
	F_m(t)=\frac{(f_1+t)_{m_1}(f_2+t)_{m_2}\cdots(f_r+t)_{m_r}}{(f_1)_{m_1}(f_2)_{m_2}\cdots(f_r)_{m_r}}, \text{ so that }F_m(n)=\frac{(f_1+m_1)_{n}\cdots(f_r+m_r)_{n}}{(f_1)_{n}\cdots(f_r)_{n}},
\end{equation}
where we applied $(f+n)_{m}/(f)_{m}=(f+m)_{n}/(f)_{n}$. The numbers  $-f_1,-f_1-1,\ldots,-f_1-m_1+1$, $\ldots$, $-f_r,-f_r-1,\ldots,-f_r-m_r+1$ are the zeros of $F_m$, which we will usually assume to be real in this paper. These zeros are partitioned into $r$ blocks with $m_i$ zeros within block $i$ separated by unity, so that the parameters $\m=(m_1,\ldots,m_r)$ are positive integers and $m=m_1+m_2+\cdots+m_r$. Any block may contain just one zero, if no other zero is at distance one from it.   This allows rewriting the \eqref{eq:fa-Euler} in terms of generalized hypergeometric functions as follows:
\begin{equation}\label{eq:fa-Euler1}
		F\!\left(\!\!\begin{array}{c}a\\-\end{array}\bigg\vert\,F_m\,\bigg\vert\,x\right)={_{r+1}F_{r}}\!\left(\!\!\begin{array}{c}a,\f+\m\\\f\end{array}\vline\,x\right)=\frac{\hat{w}\big(a;F_m\,\vert\,x\big)}{(1-x)^{m+a}}.
\end{equation}

There are several reasons for considering the form  \eqref{eq:fa-Euler}, \eqref{eq:fa-Euler1} instead of \eqref{eq:f-Euler}.  One is its relation to the generalized Euler transformations of hypergeometric functions \cite{Karp2025,MP2013}.  Another comes from our approach to establishing conditions for reality of zeros of $\hat{w}(x)$ allowing for generalization of Brenti's theorem about negative zeros to any $a>0$ and leading to new sufficient conditions ensuring that all zeros of $\hat{w}$ lie in $(0,1)$ or outside of $(0,1)$. Our proofs only use elementary analysis and are entirely different from Brenti's.  

Yet another reason stems from the observation that by using \eqref{eq:fa-Euler} or \eqref{eq:fa-Euler1} we get a unified treatment of $f$-Eulerian (in particular, $d$-Narayana) and Jacobi–Pi\~neiro multiple orthogonal polynomials. Type II Jacobi–Pi\~neiro polynomials are orthogonal with respect to the system of measures $\left(\mu_1, \ldots, \mu_r\right)$ with $d\mu_i(x)=x^{\alpha_i}(1-x)^\beta dx$ on $[0,1]$, where  $\beta>-1$ and $\alpha_1, \ldots, \alpha_r>-1$ satisfy $\alpha_i-\alpha_j \notin \mathbb{Z}$ whenever $i \neq j$. Explicit formula was discovered in \cite[Theorem~3.2]{BCvA} and is given by \cite[(23.3.5)]{IsmailBook}
	\begin{equation}\label{eq:JacobiPineiro}
		P_{\n}^{(\boldsymbol{\alpha}, \beta)}(x)=\frac{(-1)^{n}\left(\boldsymbol{\alpha}+1\right)_{\n}}{ \left(n+\boldsymbol{\alpha}+\beta+1\right)_{\n}(1-x)^\beta}\: {}_{r+1}F_r\left(\left.\!\!\!\begin{array}{c}
			-n-\beta, \boldsymbol{\alpha}+\n+1 \\
			\boldsymbol{\alpha}+1
		\end{array} \right\rvert x\right),	
	\end{equation}
	where $\n=(n_1,\ldots,n_r)$, $n=n_1+\cdots+n_r$ and the shorthand notation $(\boldsymbol{\alpha})_{\n}=(\alpha_1)_{n_1}\cdots(\alpha_r)_{n_r}$ has been used. Formula \eqref{eq:JacobiPineiro} is equivalent to \eqref{eq:fa-Euler1} with $a<0$.  
   According to \cite[Theorem 23.1.4]{IsmailBook} all $n$ zeros of $ P_{\n}^{(\boldsymbol{\alpha}, \beta)}(x) $ lie in $(0,1)$.
Note that the connection between the standard Narayana polynomials $N_{2,m}$ and the standard Jacobi polynomials $P^{\alpha,\beta}_n$ was found in \cite[Proposition~6]{KMFS}.  This connection will be generalized in subsection~5.3 to the case of $d$-Narayana and Jacobi–Pi\~neiro polynomials.

This paper is organized as follows.   In Section~2 we study zeros of the generalized $f$-Eulerian polynomials \eqref{eq:fa-Euler}. We present sufficient conditions for zeros to be negative, lie in $(0,1)$ or lie outside of $(0,1)$. We further link the zeros of $\hat{w}$ to total positivity of the generating sequence $F_m(n)(a)_n/n!$ from \eqref{eq:fa-Euler} and discuss some related open problems. In Section~3 we establish a connection between the generalized $f$-Eulerian polynomials \eqref{eq:fa-Euler} and the generalized Euler transformations for hypergeometric functions known as Miller-Paris transformations.  This connection yields new expansions in monomial and Bernstein bases.  Section~4 is devoted to investigation of $d$-Narayana polynomials using both the results from Sections 2 and 3 and other hypergeometric tools. In this section, we answer in the affirmative to the questions asked in the letter by Johann Cigler mentioned earlier.  Final Section~5 explores Jacobi-Pi\~neiro polynomials by using their hypergeometric form and without resorting to orthogonality. 
We show that the location of their zeros can be derived directly from the hypergeometric generating relation \eqref{eq:JacobiPineiro} using our results, which also provides information about the zeros for non-canonical parameter values.

\section{Zeros of \texorpdfstring{$f$}{f}-Eulerian polynomials}	

\subsection{Negative zeros and totally positive sequences}
	
We will repeatedly use the shorthand notation 
\begin{equation*}
(\f)_{\m}=(f_1)_{m_1}\cdots(f_r)_{m_r}~~\text{and}~~\f+\beta=(f_1+\beta,f_2+\beta,\ldots,f_r+\beta).
\end{equation*}
 As already noticed in the introduction we can factor the polynomial $F_{m}$ as follows: $(\f)_{\m}F_{m}(k)=(k+\alpha_1)\cdots(k+\alpha_m)$, where $\alpha_j$ are the negated roots of $F_m$, i.e. the numbers 
	$f_1,f_1+1,\ldots, f_1+m_1-1$, $f_2,f_2+1,\ldots,f_2+m_2-1$, $\ldots$, $f_r, f_r+1,\ldots, f_{r}+m_{r}-1$.  Assume, without loss of generality, that they are ordered ascending, i.e. 
	$\alpha_{1}\le \alpha_{2}\le \cdots\le \alpha_{m}$.
	We can write
	\begin{equation}\label{eq:Tproduct}
		(\f)_{\m}\cdot{}_{r+1}F_{r}\left.\!\!\left(\!\begin{matrix}a, \f+\m\\\f\end{matrix}\right\vert x\right)=\sum\limits_{k=0}^{\infty}(\f+k)_{\m}\frac{(a)_kx^k}{k!}=M_{\alpha_m}M_{\alpha_{m-1}}\cdots M_{\alpha_1}(1-x)^{-a},	
	\end{equation}
	where the operator $M_{\alpha}$, $\alpha\in\mathbb{R}$, is defined by its action on a formal power series as follows: 
	\begin{equation}\label{eq:Talpha-defined}
		M_{\alpha}\left(\sum\limits_{k=0}^{\infty}a_kx^k\right)=
		\sum\limits_{k=0}^{\infty}(k+\alpha)a_kx^k.
	\end{equation}
	If the series has positive radius of convergence and $f(x)=\sum_{k=0}^{\infty}a_kx^k$, we clearly have
	$$
	M_{\alpha}f(x)=x^{1-\alpha}Dx^{\alpha}f(x),
	$$
	where $D$ denotes the derivative with respect to $x$. This formula is also valid for divergent series if understood term-wise. For series of the form \eqref{eq:fa-Euler} we are interested in how the action of $M_{\alpha}$ affects the reality of zeros of the $f$-Eulerian polynomial $\hat{w}$.  We give a partial answer to this inquiry in
    Lemmas~\ref{lm:Talpha-positive_s}, \ref{lm:Talpha-negative_s} and \ref{lm:Talpha-zeros-with-gap}.  Iterative application of these lemmas will lead to certain statements about zeros of $\hat{w}$ presented in 
    Theorems~\ref{th:negative_zeros}, \ref{th:zeros-in-0-1} and  \ref{th:zeros_greater_1} below. The first theorem hinges on the following 
	\begin{lemma}\label{lm:Talpha-positive_s}
		Suppose $s>0$ and
		\begin{equation}\label{eq:f-Pn}
			f(x)=\frac{P_n(x)}{(1-x)^s},
		\end{equation}
		where $P_n$ is a polynomial of degree $n\ge0$ with only real negative zeros \emph{(}a real constant if $n=0$\emph{)}. Then for any $\alpha\in\mathbb{R}$ 
		the function $M_{\alpha}f$ has the form
		\begin{equation}\label{eq:Talpha-f}
			M_{\alpha}f(x)=\frac{\hat{P}(x)}{(1-x)^{s+1}},
		\end{equation}
		where the polynomial $\hat{P}(x)$ has degree $n+1$ if $\alpha\ne s-n$ or degree $n$ if $\alpha=s-n$. All zeros of $\hat{P}$ are real if $\alpha>0$ or if $\alpha<0$ and $s-\alpha-n>0$.  
		
		More precisely, if $\alpha>0$, $s-\alpha-n>0$, then all zeros of $\hat{P}(x)$ are negative and 
		\begin{equation}\label{eq:interlacing_allnegative}
			\hat{x}_{n+1}\le x_{n}\le\hat{x}_{n}\le\cdots\le x_1\le\hat{x}_{1}<0,
		\end{equation}
		where $x_j$ \emph{(}$\hat{x}_j$\emph{)} are zeros of $P_n$ \emph{(}$\hat{P}$\emph{)}; if $s-\alpha-n=0$ the above relation remains valid with $\hat{x}_{n+1}$ omitted;  if $\alpha>0$ and $s-\alpha-n<0$ all but one zero of $\hat{P}(x)$  are negative and
		\begin{equation}\label{eq:interlacing_onepositive}
			x_{n}\le\hat{x}_{n+1}\le\cdots\le x_1\le\hat{x}_2<0<1<\hat{x}_{1}.
		\end{equation}
		If $\alpha<0$ and $s-\alpha-n>0$, then all but one zero of $\hat{P}(x)$  are negative and
		\begin{equation}\label{eq:interlacing_negative_alpha}
			\hat{x}_{n+1}\le x_{n}\le\hat{x}_{n}\le\cdots\le x_{2}\le\hat{x}_2<x_{1}<0<\hat{x}_{1}<1.
		\end{equation}
		Moreover, if all zeros of $P_n(x)$ are simple, then so are the zeros of $\hat{P}(x)$ and all inequalities in \eqref{eq:interlacing_allnegative}, \eqref{eq:interlacing_onepositive} and \eqref{eq:interlacing_negative_alpha} are strict.
	\end{lemma}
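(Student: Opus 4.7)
The plan is to obtain an explicit formula for $\hat{P}$, express it through a partial fraction identity that identifies its zeros with those of a rational function, and then locate the zeros by a direct sign analysis combined with a degree count. First I would use that the defining relation \eqref{eq:Talpha-defined} is equivalent term-wise to $M_{\alpha}f=\alpha f+xf'$, and apply it to $f=P_{n}/(1-x)^{s}$. Collecting over the common denominator $(1-x)^{s+1}$ yields
$$\hat{P}(x)=\alpha(1-x)P_{n}(x)+x(1-x)P_{n}'(x)+sxP_{n}(x).$$
The coefficient of $x^{n+1}$ in this expression equals $(s-\alpha-n)c_{n}$, where $c_{n}$ is the leading coefficient of $P_{n}$, which proves the degree claim. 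Without loss of generality $c_{n}>0$, so that $P_{n}>0$ on $[0,\infty)$; I would first treat the case of simple zeros $x_{1}>x_{2}>\cdots>x_{n}$ of $P_{n}$ and recover the general case by continuity of zeros in coefficients.

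The key observation is the partial fraction identity
$$\Phi(x):=\frac{\alpha}{x}+\sum_{i=1}^{n}\frac{1}{x-x_{i}}+\frac{s}{1-x}=\frac{\hat{P}(x)}{x(1-x)P_{n}(x)},$$
which is verified by combining the left side over the common denominator $x(1-x)P_{n}(x)$ and using $c_{n}\sum_{i}\prod_{j\neq i}(x-x_{j})=P_{n}'(x)$. It identifies the zeros of $\hat{P}$ outside the pole set $\{0,x_{1},\ldots,x_{n},1\}$ with the zeros of $\Phi$. Direct substitution also records the convenient signs $\hat{P}(0)=\alpha P_{n}(0)$ (sign of $\alpha$), $\hat{P}(1)=sP_{n}(1)>0$, and $\hat{P}(x_{i})=x_{i}(1-x_{i})P_{n}'(x_{i})$, in which $x_{i}(1-x_{i})<0$ and $P_{n}'(x_{i})$ alternates in sign with $i$.

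The heart of the argument is an interval-by-interval analysis of $\Phi$ on the real line. Its limits at the poles are $\mp\infty$ around each $x_{i}$, $\sign(\alpha)\cdot(\mp\infty)$ around $0$, and $\pm\infty$ around $1$; at infinity the expansion $\Phi(x)=(\alpha+n-s)/x+O(x^{-2})$ shows that $\Phi(\pm\infty)\to 0$ with sign determined by $\sign(s-\alpha-n)$. In each of the intervals $(-\infty,x_{n})$, $(x_{i+1},x_{i})$, $(x_{1},0)$, $(0,1)$, $(1,\infty)$ the intermediate value theorem forces a zero whenever the limiting signs at the two endpoints differ; comparing the resulting count with $\deg\hat{P}\in\{n,n+1\}$ exhausts the zeros and pins down the exact interval containing each. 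The four cases of the lemma correspond to the four sign combinations of $\alpha$ and $s-\alpha-n$: in the regime $\alpha>0$, $s-\alpha-n>0$ one zero lies in each of $(-\infty,x_{n})$, $(x_{i+1},x_{i})$, $(x_{1},0)$, giving \eqref{eq:interlacing_allnegative}; when $s-\alpha-n$ turns negative (with $\alpha>0$) the $(-\infty,x_{n})$ zero migrates to $(1,\infty)$, producing \eqref{eq:interlacing_onepositive}; the boundary case $s-\alpha-n=0$ simply drops the $(-\infty,x_{n})$ zero along with one degree; finally, when $\alpha<0$ (and $s-\alpha-n>0$) the reversal of the sign of $\Phi$ near $0$ shifts the zero from $(x_{1},0)$ to $(0,1)$, giving \eqref{eq:interlacing_negative_alpha}. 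Multiple zeros of $P_{n}$ are handled by perturbing $P_{n}$ to have simple zeros, applying the established case, and passing to the limit; the strict inequalities then degrade to weak ones. The main technical obstacle will be the case bookkeeping: in every regime one must verify that the IVT-forced zeros, combined with the degree identity, leave no room for extra zeros in the intervals where no sign change is forced, which requires careful tracking of how $\sign(\alpha)$ and $\sign(s-\alpha-n)$ determine the behavior of $\Phi$ at $0$ and at $\pm\infty$.
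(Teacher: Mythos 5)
Your proposal is correct and follows essentially the same route as the paper: the same explicit formula $\hat{P}(x)=x(1-x)P_n'(x)+(\alpha+(s-\alpha)x)P_n(x)$, the same sign data $\hat{P}(0)=\alpha P_n(0)$, $\hat{P}(x_i)=x_i(1-x_i)P_n'(x_i)$, $\hat{P}(1)=sP_n(1)$, leading coefficient $(s-\alpha-n)c_n$, the same intermediate-value-plus-degree-count argument, and the same perturbation step for multiple zeros. Your partial-fraction function $\Phi$ (the logarithmic derivative of $x^{\alpha}f(x)$) is merely a repackaging of the paper's direct sign evaluation of $\hat{P}$ at $0$, at the $x_i$, at $1$ and at $\pm\infty$.
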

	
	\begin{proof} As all zeros of $P_n$ are negative, all its coefficients have the same sign, which we can, without loss of generality, assume to be positive.  Next, suppose that  all zeros of $P_n(x)$ are simple and assume for a moment that $x>0$. Differentiation yields
		\begin{multline*}
			M_{\alpha}f(x)=x^{1-\alpha}D\frac{x^{\alpha}P_n(x)}{(1-x)^s}
			=\frac{(\alpha x^{\alpha-1}P_n(x)+x^{\alpha}P_n'(x))(1-x)^s+sx^{\alpha}P_n(1-x)^{s-1}}{x^{\alpha-1}(1-x)^{2s}}
			\\
			=\frac{x(1-x)P_n'(x)+(x(s-\alpha)+\alpha)P_n(x)}{(1-x)^{s+1}}=\frac{\hat{P}(x)}{(1-x)^{s+1}}.
		\end{multline*}
		The expression for $\hat{P}(x)$ from the ultimate equality is obviously true irrespective of the sign of $x$. The numerator polynomial 
		\begin{equation}\label{eq:hatP}
			\hat{P}(x)=x(1-x)P_n'(x)+(x(s-\alpha)+\alpha)P_n(x)	
		\end{equation}
		has the leading term $a_{n}(s-\alpha-n)x^{n+1}$, where $P_n(x)=a_nx^n+a_{n-1}x^{n-1}+\cdots$, and, hence, for $s-\alpha-n\ne0$ the degree of  $\hat{P}$ is indeed $n+1$. If $s-\alpha-n=0$, the coefficient at $x^n$ becomes $sa_{n}+a_{n-1}\ne0$ since both $a_{n}$ and $a_{n-1}$ do not vanish and have the same sign, while $s>0$. This proves that the degree of  $\hat{P}$ is exactly $n$ if $s-\alpha-n=0$.
		
		First, we will treat the case $\alpha>0$. Taking $x=0$ in \eqref{eq:hatP} we see that
		$\hat{P}(0)=\alpha P_{n}(0)>0$.  Denoting by $x_1$ the largest zero of $P_{n}$ we obtain  $\hat{P}(x_1)=x_1(1-x_1)P_n'(x_1)$. As $P_{n}(x)>0$ for all $x>x_1$ and $x_1$ is a simple zero, we conclude that $P_n'(x_1)>0$ and hence $\hat{P}(x_1)<0$.  Considering similarly the values $\hat{P}(x_j)$ at further zeros  $x_2>x_3>\cdots>x_{n}$ of $P_n(x)$, we conclude that $(-1)^{j}\hat{P}(x_j)>0$ for $j=1,\ldots,n$. Hence, there is an odd number of zeros of $\hat{P}$ between any two consecutive zeros of $P_n$ plus an odd number of zeros of $\hat{P}$ between $x_1$ and $0$.  If any of these odd numbers were $>1$, $\hat{P}$ would have had at least $n+2$ zeros which is impossible as its degree does not exceed $n+1$. Hence, there is exactly one zero of $\hat{P}$ between any two consecutive zeros of $P_n$ plus one zero of $\hat{P}$ between $x_1$ and $0$. If degree of $\hat{P}$ is $n$, i.e. the coefficient $s-\alpha-n$ of the leading term vanishes, then these are all the zeros of $\hat{P}$ and we have established \eqref{eq:interlacing_allnegative} with strict inequality signs and with no $\hat{x}_{n+1}$.  If $s-\alpha-n>0$, then  $\hat{P}(x)>0$ for all $x\ge0$, so that one remaining zero of $\hat{P}$ lies in the interval $(-\infty,x_{n})$ which proves \eqref{eq:interlacing_allnegative} with strict signs. If $s-\alpha-n<0$,  then $\hat{P}(x)<0$ for sufficiently large positive $x$, so that the remaining zero must be positive, but as both $\hat{P}(0)=\alpha P_{n}(0)>0$ and $\hat{P}(1)=sP_n(1)>0$, it must be greater than $1$ proving \eqref{eq:interlacing_onepositive}.

		Next, suppose $\alpha<0$, $s-\alpha-n>0$.  In this case 	$\hat{P}(0)=\alpha P_{n}(0)<0$.  As the leading coefficient $s-\alpha-n>0$ there is an odd number of zeros of $\hat{P}$ on $(0,\infty)$. At the same time  $\hat{P}(x_j)=x_{j}(1-x_{j})P_{n}'(x_j)$ and we conclude, as before,  that $(-1)^{j}\hat{P}(x_j)>0$ for $j=1,2,\ldots,n$. This implies that there are $n-1$ zero of $\hat{P}$ lying in $n-1$ intervals with endpoints $x_j$, $j=1,\ldots,n$.  Hence, there is exactly one positive zero (as there must be an odd number of them) and exactly one zero on the left of $x_n$.  At the same time as 	$\hat{P}(1)=sP_n(1)>0$ the positive zero lies in $(0,1)$. This proves \eqref{eq:interlacing_negative_alpha} with strict inequalities.   
		
		Finally note that if zeros of $P_n(x)$ are not simple, there exists an arbitrarily small perturbation of $P_n(x)$ with only simple negative zeros, so that by the above argument all zeros of the perturbed $\hat{P}$ satisfy the claimed properties.   Formula \eqref{eq:hatP} shows that the perturbed $\hat{P}$ becomes  $\hat{P}$  as perturbation of  $P_n(x)$ tends to zero. 
	\end{proof}

		Recursive application of the Lemma~\ref{lm:Talpha-positive_s} to decomposition \eqref{eq:Tproduct} yields our first theorem. 
	\begin{theorem}\label{th:negative_zeros}
		Denote by $(\alpha_{1},\ldots,\alpha_{m})$ the zeros of the polynomial $F_m(-t)$ in \eqref{eq:fa-Euler} \emph{(}multiple zeros are repeated according to multiplicity\emph{)} and assume that $a>0$. If 
		\begin{equation}\label{eq:condbrenti0}
			\begin{split}
				&0 \leq \alpha_{1} \leq \alpha_{2} \leq \cdots \leq \alpha_{p}~~\text{and}
				\\
				&\big(\{a,a+1,a+2,\ldots\}\cap[a,\alpha_{p})\big) \subset \{ \alpha_{1}, \alpha_{2}, \cdots, \alpha_{p}\}
			\end{split}
		\end{equation}
		for $p=m$, then all zeros of the $F_m$-Eulerian polynomial $\hat{w}(a;F_m\vert\:x)$ defined in \eqref{eq:fa-Euler} or, equivalently, of  the hypergeometric function on the left hand side of \eqref{eq:Tproduct} are real and negative.  If \eqref{eq:condbrenti0} holds for $p=m-1$ and $\alpha_{m}$ is any real number, then all zeros of $\hat{w}(a;F_m\vert\:x)$ are real and at most one of them is positive. 
	\end{theorem}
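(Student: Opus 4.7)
The strategy is to iterate Lemma~\ref{lm:Talpha-positive_s} along the operator factorization \eqref{eq:Tproduct}, starting from the initial pair $P_{0}(x)\equiv1$, $s_{0}=a>0$, and to track the numerator degree $n_{j}$ together with the denominator exponent $s_{j}$ after each application of $M_{\alpha_{j}}$. By the lemma, $M_{\alpha_{j}}$ turns $P_{n_{j-1}}/(1-x)^{s_{j-1}}$ into $\widehat{P}/(1-x)^{s_{j-1}+1}$. Introducing the auxiliary parameter $\ell_{j}:=s_{j-1}-n_{j-1}$, we have $\ell_{1}=a$, and the lemma shows that the numerator retains only real negative zeros whenever $\alpha_{j}>0$ and $\alpha_{j}\le\ell_{j}$; moreover $\ell_{j+1}=\ell_{j}$ if $\alpha_{j}<\ell_{j}$ (degree rises by one), while $\ell_{j+1}=\ell_{j}+1$ if $\alpha_{j}=\ell_{j}$ (degree stays put — a ``cancellation''). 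The normalization $F_{m}(0)=1$ already forces $\alpha_{j}>0$, so the substantive task is to guarantee $\alpha_{j}\le\ell_{j}$ throughout.

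The heart of the proof is to verify $\alpha_{j}\le\ell_{j}$ for $j=1,\dots,p$ under condition \eqref{eq:condbrenti0}, by induction on $j$ and a short contradiction argument. Since $\ell$ starts at $a$ and advances by one precisely at each cancellation, cancellations occur successively at the values $a,a+1,a+2,\dots$ without skipping — so the $k$-th cancellation happens exactly at value $a+k-1$. Suppose $j$ is the smallest index with $\alpha_{j}>\ell_{j}$, and write $\ell_{j}=a+c$, so that exactly $c$ cancellations have occurred among the steps $1,\dots,j-1$. Monotonicity of the $\alpha$'s gives $\alpha_{p}\ge\alpha_{j}>a+c$, so \eqref{eq:condbrenti0} forces $a+c\in\{\alpha_{1},\dots,\alpha_{p}\}$. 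No index $i\ge j$ can supply this value (all such $\alpha_{i}$ exceed $a+c$), so $\alpha_{i}=a+c$ for some $i<j$. The inductive hypothesis gives $\ell_{i}\ge\alpha_{i}=a+c$, while $\ell_{i}\le a+c$ as well (at most $c$ cancellations precede step $j$, hence also step $i$); thus $\alpha_{i}=\ell_{i}$, a cancellation at value $a+c$ strictly before step $j$. But such a cancellation requires the lower values $a,a+1,\dots,a+c-1$ to have been consumed by still earlier cancellations, producing at least $c+1$ cancellations among $1,\dots,j-1$ — contradicting the definition of $c$.

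Once $\alpha_{j}\le\ell_{j}$ is secured for $j=1,\dots,p$, successive application of the two ``all-zeros-negative'' cases of Lemma~\ref{lm:Talpha-positive_s} (covering both $\alpha_{j}<\ell_{j}$ and $\alpha_{j}=\ell_{j}$) yields the first assertion on taking $p=m$. For the second assertion ($p=m-1$, $\alpha_{m}\in\mathbb{R}$ arbitrary), the same induction produces an intermediate polynomial with all zeros real and negative; a final application of $M_{\alpha_{m}}$ then falls into one of the three precise cases of the same lemma, which collectively allow at most one non-negative zero — lying in $(0,1)$ if $\alpha_{m}<0$ or exceeding $1$ if $\alpha_{m}>\ell_{m}$. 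The principal obstacle I anticipate is the bookkeeping in the middle paragraph: one must see clearly that cancellations march through the arithmetic progression $a,a+1,a+2,\dots$ without gaps, and then combine this observation with the ``no-gap'' requirement on $\{a,a+1,\dots\}\cap[a,\alpha_{p})$ built into \eqref{eq:condbrenti0} to force the contradiction; the easier cases $\alpha_{j}\le a$ are absorbed along the way.
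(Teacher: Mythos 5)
Your proposal is correct and follows the same route as the paper: iterate Lemma~\ref{lm:Talpha-positive_s} along the factorization \eqref{eq:Tproduct} starting from $(1-x)^{-a}$, tracking the degree $n_j$ and exponent $s_j$ so that after $p$ steps $n_p=p-|\{\alpha_1,\dots,\alpha_p\}\cap\{a,a+1,\dots\}|$, then invoke \eqref{eq:interlacing_allnegative} for $p=m$ and \eqref{eq:interlacing_onepositive} or \eqref{eq:interlacing_negative_alpha} for the final step when $p=m-1$. Your middle paragraph merely makes explicit, via the invariant $\alpha_j\le\ell_j$ and the observation that cancellations march through $a,a+1,\dots$ without gaps, a bookkeeping step the paper leaves implicit.
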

	\textbf{Remark.} The above theorem coincides  with Brenti's result \cite[Theorem~3.7]{BrentiContMath}, \cite[Theorem~4.4.1]{BrentiMemoir} mentioned in the introduction when $a=1$ and \eqref{eq:condbrenti0} holds for $p=m$.  Indeed, in this case Brenti's condition requires that all integers $1,2,\ldots\lfloor \alpha_m\rfloor$ are contained in the set $(\alpha_{1},\ldots,\alpha_{m})$ which is the same as \eqref{eq:condbrenti0} .   For the case when some of the roots of $F_m(t)=0$ are positive Brenti's theorem gives information not contained in Theorem~\ref{th:negative_zeros}.      
\begin{proof}
Denote $s_k=a+k$, $k=0,1,\ldots$.  Assuming \eqref{eq:condbrenti0}, according to Lemma~\ref{lm:Talpha-positive_s} we get
		$$
		M_{\alpha_1}\frac{1}{(1-x)^{s_0}}=\frac{P_{n_1}(x)}{(1-x)^{s_1}}
		$$
		where $n_1=\mathrm{deg}(P_{n_1})=1$ if $\alpha_1<s_0$ or $n_1=0$ if $\alpha_1=s_0$. Proceeding similarly, after $p$ steps we get
		$$
		M_{\alpha_p}{\cdots}M_{\alpha_2}M_{\alpha_1}\frac{1}{(1-x)^{s_0}}=\frac{P_{n_p}(x)}{(1-x)^{s_p}},
		$$
		where $n_p=p-|\{\alpha_1,\ldots,\alpha_p\}\cap\{a,a+1,\ldots,a+p\}|$.  Hence, if  \eqref{eq:condbrenti0} holds for $p=m$ the claim follows by \eqref{eq:interlacing_allnegative} and the text around it.  If \eqref{eq:condbrenti0} holds for $p=m-1$ and  
		$$
		\alpha_m>s_{p}-n_{p}=a+|\{\alpha_1,\ldots,\alpha_p\}\cap\{a,a+1,\ldots,a+p\}|
		$$
		the claim follows by \eqref{eq:interlacing_onepositive}. If $\alpha_m<0$ the condition 
		$s_p-n_p>\alpha_{m}$ is automatically true and the claim follows by \eqref{eq:interlacing_negative_alpha}.
	\end{proof}

\textbf{Remark}. Applications of Lemma~\ref{lm:Talpha-positive_s} are not limited to generalized $f$-Eulerian polynomials \eqref{eq:fa-Euler} as we will demonstrate now.  In his recent preprint \cite{He2023} Tian-Xiao He introduced $m$-Eulerian numbers and their generating polynomials denoted $S_{m;n}(x)$.  These numbers count the permutations of the multiset $M_n=\{1,\ldots,1,2,\ldots,2,\ldots, n,\ldots,n\}$, where each integer is repeated $m$ times, with exactly $k$ descents, i.e. such that $a_{j}>a_{j+1}$ or $j=mn$ for exactly $k$ values of $j\in\{1,2,\ldots,mn\}$. Here $a_1,\ldots,a_{mn}$ are permutations of $M_n$ satisfying the following property: if $u<v<w$ and $a_{u}=a_{w}$, then $a_{u}\ge a_{v}$ \cite[Definition~2.1]{He2023}.  According to \cite[(11)]{He2023} these polynomials satisfy:
$$
S_{m;n+1}(x)=x(1-x)S_{m;n}'(x)+(1+mnx)S_{m;n}(x)
$$
This formula can be identified with \eqref{eq:hatP} choosing $s=mn+1$, $\alpha=1$.  As $S_{m;0}=1$ we can apply Lemma~\ref{lm:Talpha-positive_s}
recursively to conclude that all zeros of $S_{m;n}(x)$ are real and negative for all $m,n\ge1$.  In particular, this proves that  the sequence of $m$-Eulerian numbers is log-concave, and, moreover, totally positive.  Note that polynomials $S_{m;n}(x)$ for $m\ge2$ are not of the type \eqref{eq:fa-Euler}  as the generating function \cite[(15)]{He2023} is not hypergeometric.

\medskip

\begin{corollary}\label{cr:large-a}
		Suppose $f_{j}+m_{j}<a$ for all $j=1,\ldots,r$. Then all zeros of the  $F_m$-Eulerian 
		polynomial $\hat{w}(a;F_m\vert\:x)$, and the hypergeometric function on the left hand 
		side of \eqref{eq:fa-Euler1} are real and negative. 
\end{corollary}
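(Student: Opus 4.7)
The plan is to deduce this corollary as an immediate specialization of Theorem~\ref{th:negative_zeros}, applied with $p=m$. The whole task reduces to verifying the two clauses in condition~\eqref{eq:condbrenti0}.

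First I would recall the explicit description of the negated roots of $F_m$ from \eqref{eq:Fm-defined}: they are the numbers $f_j+i$ for $j=1,\ldots,r$ and $i=0,1,\ldots,m_j-1$. Since the zeros of $F_m$ are assumed real and the standing setting guarantees $f_j\ge 0$, the ordered list $\alpha_1\le\alpha_2\le\cdots\le\alpha_m$ is automatically non-negative, which gives the first clause $0\le\alpha_1\le\cdots\le\alpha_m$ of \eqref{eq:condbrenti0}.

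The key computation is then bounding the largest root. Observe that
\[
\alpha_m=\max_{1\le j\le r}(f_j+m_j-1)\le\max_{1\le j\le r}(f_j+m_j)-1<a-1<a,
\]
where the strict inequality is precisely the hypothesis $f_j+m_j<a$ of the corollary. Consequently $[a,\alpha_m)=\varnothing$, so the set inclusion
\[
\bigl(\{a,a+1,a+2,\ldots\}\cap[a,\alpha_m)\bigr)\subset\{\alpha_1,\ldots,\alpha_m\}
\]
holds vacuously. Both parts of \eqref{eq:condbrenti0} are thereby satisfied for $p=m$.

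Finally, since $a>0$ (which itself follows from $a>f_j+m_j\ge m_j\ge 1$), the hypothesis of Theorem~\ref{th:negative_zeros} is met with $p=m$, and the theorem yields that every zero of $\hat{w}(a;F_m\vert x)$ is real and negative, which via \eqref{eq:fa-Euler1} translates to the corresponding statement for the hypergeometric function. I do not anticipate any real obstacle: the proof is essentially a bookkeeping argument verifying that the hypothesis $f_j+m_j<a$ is designed exactly to push $\alpha_m$ strictly below $a$ and thus trivialize the second condition of \eqref{eq:condbrenti0}.
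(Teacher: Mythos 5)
Your proposal is correct and is essentially the argument the paper intends: the corollary is stated without proof precisely because it is the specialization of Theorem~\ref{th:negative_zeros} with $p=m$ in which the hypothesis $f_j+m_j<a$ forces $\alpha_m<a$, so the intersection in \eqref{eq:condbrenti0} is empty and the inclusion holds vacuously. You are also right that the first clause $0\le\alpha_1$ requires the implicit standing assumption $f_j\ge 0$ (without it the corollary fails, e.g.\ $F_1(t)=1-2t$, $a=1$ gives $\hat w(x)=1-3x$ with a positive zero), so flagging that as part of the setting is appropriate rather than a gap in your argument.
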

Theorem~\ref{th:negative_zeros} only provides sufficient conditions on $F_m$ in order that $\hat{w}$ had only real negative zeros, which are in no way necessary as we will see shortly in Examples~\ref{ex1} and \ref{ex2}.  Hence, to the best of our knowledge the following problem remains open even for $a=1$:
\begin{open problem}\label{pr1}
Given $a>0$ describe the set of polynomials $F_m\in\mathbb{R}[x]$ such that $\hat{w}$ generated by
\eqref{eq:fa-Euler} has only real negative zeros. This could be asked either for a given fixed $m\in\N$ or for all $m\in\N$. 
\end{open problem}
When $a\in\N$ the above problem can be restated as a problem about total positivity of the sequences of the form $\{F_m(n)(a)_n/n!\}_{n\ge0}$.  Recall that real sequence $(a_k)_{k=0}^\infty$  is called totally positive ($TP$) or P\'{o}lya frequency if all minors of the infinite matrix
\begin {equation}\label{mat}
		\left\|
		\begin{array}{ccccc}
			a_0 & a_1 & a_2 & a_3 &\ldots \\
			0   & a_0 & a_1 & a_2 &\ldots \\
			0   &  0  & a_0 & a_1 &\ldots \\
			0   &  0  &  0  & a_0 &\ldots \\
			\vdots&\vdots&\vdots&\vdots&\ddots
		\end{array}
		\right\|
\end {equation}
are nonnegative.  The following celebrated theorem due to  
Aissen,	Schoenberg, Whitney and Edrei \cite{aissen,Edrei} (see also \cite[p. 412]{tp}) characterizes $TP$ sequences $\{a_n\}_{n\ge0}$ in terms of their generating functions $f(z)=\sum_{n\ge0}a_nz^n$.

\begin{theorem*}[ASWE] A function $f$ is a generating function of a $TP$ sequence if and only if
\begin{equation}\label{eq:aswe}
f(z)=C z^q e^{\gamma z}\prod_{k=1}^\infty \frac{ (1+\alpha_kz)}{(1-\beta_kz)},
\end{equation}
where $C\ge 0$, $q\in\mathbb{N}\cup\{0\}$, $\gamma\ge0$, $\alpha_k,\beta_k\ge 0$, $\sum_{k=1}^\infty(\alpha_k+\beta_k)
<\infty$. 
\end{theorem*}

Suppose $F_m$ is a polynomial with positive coefficients and $a\in\N$.  By theorem ASWE, the sequence 
$\left(F_m(n)\frac{(a)_n}{n!}\right)_{n=0}^\infty$ is totally positive if and only if the 
corresponding polynomial $\hat{w}$ has  only  real negative zeros. Hence, Theorem~\ref{th:negative_zeros} can be restated as follows:
\begin{theorem}\label{th:total-pos}
		Let $a \in \mathbb{N}$ and  $0 \leq \alpha_{1} \leq \alpha_{2} \leq \cdots \leq \alpha_{m}$ 
		be given real numbers such that  
		\begin{equation}
			\label{condbrenti}
			\left( [a,   \alpha_{m}) \cap \mathbb{N}\right) \subset \{ \alpha_{1}, \alpha_{2},  
			\cdots ,  \alpha_{m} \}. 
\end{equation}
Then the sequence  
\begin{equation}\label{totpos}\left(\frac{(a)_k}{k!}(k+\alpha_1)(k+\alpha_2)\cdot 
			\ldots \cdot (k+\alpha_m)\right)_{k=0}^\infty
		\end{equation} 
		is totally positive.
	\end{theorem}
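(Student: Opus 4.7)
The theorem is essentially a reformulation of Theorem~\ref{th:negative_zeros} (when $a\in\mathbb{N}$) in the language of total positivity via the ASWE characterization stated above, and the plan is to make this reformulation explicit.

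First, I would identify the relevant $F_m$-Eulerian polynomial. Assuming for the moment that $\alpha_1>0$, set
$$
F_m(t)=\frac{(t+\alpha_1)(t+\alpha_2)\cdots(t+\alpha_m)}{\alpha_1\alpha_2\cdots\alpha_m},
$$
so that $F_m(0)=1$ and the zeros of $F_m(-t)$ are precisely $\alpha_1\leq\cdots\leq\alpha_m$. Because $a\in\mathbb{N}$, the set $\{a,a+1,a+2,\ldots\}\cap[a,\alpha_m)$ equals $[a,\alpha_m)\cap\mathbb{N}$, so hypothesis \eqref{condbrenti} is exactly condition \eqref{eq:condbrenti0} with $p=m$. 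Theorem~\ref{th:negative_zeros} then guarantees that $\hat w(a;F_m\,\vert\,x)$ has only real negative zeros. Combined with the fact that its coefficients are non-negative (being a quotient of two series with non-negative coefficients, as $\alpha_i\geq 0$ and $a>0$), this gives a factorization
$$
\hat w(a;F_m\,\vert\,x)=A\prod_{j=1}^{d}(1+\hat{\beta}_j x),\qquad A>0,\ \hat{\beta}_j>0,
$$
for some $d\leq m$.

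Second, I would plug this into the generating function identity \eqref{eq:fa-Euler} and write
$$
\sum_{k=0}^{\infty}\frac{(a)_k}{k!}(k+\alpha_1)\cdots(k+\alpha_m)\,x^k
=\alpha_1\cdots\alpha_m\cdot\frac{\hat w(a;F_m\,\vert\,x)}{(1-x)^{m+a}}
=\alpha_1\cdots\alpha_m\,A\,\frac{\prod_{j=1}^{d}(1+\hat{\beta}_j x)}{\prod_{k=1}^{m+a}(1-x)}.
$$
This expression is exactly of the form \eqref{eq:aswe} in the ASWE theorem with $q=0$, $\gamma=0$, finitely many non-zero $\hat{\beta}_j>0$, and the $\beta_k$'s all equal to $1$ with multiplicity $m+a$. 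The ASWE theorem then delivers total positivity of \eqref{totpos}.

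Finally, the degenerate case in which some initial $\alpha_i$'s vanish requires only a minor bookkeeping adjustment: if $\alpha_1=\cdots=\alpha_q=0<\alpha_{q+1}$, then the first $q$ terms of \eqref{totpos} are zero, a factor $x^q$ may be pulled out of the generating function, and one applies the above argument to the reduced polynomial of degree $m-q$ in the remaining variables (this is precisely the shift mentioned right after \eqref{eq:fa-Euler}). The $x^q$ factor is permitted in \eqref{eq:aswe}, so the ASWE conclusion is unaffected. The substantive part of the argument — establishing that $\hat w$ has only real negative zeros — is already contained in Theorem~\ref{th:negative_zeros}, so no new analytic difficulty arises here; the only care required is in the index bookkeeping at the boundary case $\alpha_1=0$.
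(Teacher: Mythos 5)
Your proof is correct and follows essentially the same route as the paper: identify the sequence with $F_m(n)(a)_n/n!$, note that for $a\in\mathbb{N}$ condition \eqref{condbrenti} is exactly \eqref{eq:condbrenti0}, invoke Theorem~\ref{th:negative_zeros} to get real negative zeros of $\hat w$, and conclude via the ASWE characterization (your explicit handling of the normalization and of the boundary case $\alpha_1=0$ is bookkeeping the paper leaves implicit). One small quibble: your parenthetical justification that the coefficients of $\hat w$ are non-negative because it is ``a quotient of two series with non-negative coefficients'' is not a valid argument in general, but it is also unnecessary, since $\hat w(0)=F_m(0)>0$ together with the negativity of all zeros already forces the factorization $A\prod_{j}(1+\hat\beta_j x)$ with $A>0$ and $\hat\beta_j>0$.
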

The following examples show that the conditions of the above theorem are not necessary.
\begin{example}\label{ex1}
Set $F_2(z) =(z+\frac{11}{10})^2$. Then conditions of Theorem~\ref{th:total-pos} are clearly violated. Nevertheless,
\begin{equation}\label{exxx1} \sum_{n=0}^\infty \Big(n+\frac{11}{10}\Big)^2 
x^n= \frac{x^2+78x +121}{100(1-x)^3},
\end{equation} 
with $\hat{W}(x)=x^2+78x +121$ having two negative real zeros, so that the sequence $\left\{(n+\frac{11}{10})^2\right\}_{n=0}^\infty$ 
is totally positive.
\end{example}

\begin{example} \label{ex2} 
Set $F_2(x)=z^2+\frac{1}{8}$. Its zeros are non-real, but  
\begin{equation}\label{exxx2}
\sum_{n=0}^\infty \Big(n^2 +\frac{1}{8}\Big) 
x^n= \frac{(3x+1)^2}{8(1-x)^3},
\end{equation}
so that $\hat{w}(x)=(3x+1)^2$ has one real negative zero of multiplicity $2$. 
Hence, the sequence $\left\{(n^2+\frac{1}{8})\right\}_{n=0}^\infty$ is totally positive. 
\end{example}

Generalizing two previous examples, a simple calculation yields
$$
\sum _{k=0}^{\infty}(k^2+bk+c)\frac{(a)_k}{k!}x^k=\frac{(a^2-ab+c)x^2+(a(b+1)-2c)x+c}{(1-x)^{a+2}}.
$$
This implies that, under the assumption $a>0$, the numerator polynomial has negative real roots if and only if  $a(b+1)^2-4c(a+1)\ge0$, $c>0$, $a(b+1)>2c$ and $a^2-ab+c>0$.  These conditions can be satisfied even when  the roots of $k\to k^2+bk+c$  are complex, i.e. $b^2<4c$.  But even assuming the roots  $-\alpha,-\beta$ are real, we get:
$$
\sum _{k=0}^{\infty}(k+\alpha)(k+\beta)\frac{(a)_k}{k!}x^k=\frac{(\alpha\beta-a(\alpha+\beta)+a^2)x^2+(a(\alpha+\beta+1)-2\alpha\beta)x+\alpha\beta}{(1-x)^{a+2}}=\frac{\hat{w}_{a,\alpha,\beta}(x)}{(1-x)^{a+2}}.
$$
For $a=1$, the region in $\alpha$, $\beta$ plane such that $\hat{w}_{1,\alpha,\beta}(x)$ has only real negative zeros is shown in Figure~\ref{fig:1}.
\begin{figure}[!ht]
\begin{center}
\includegraphics[width=9cm]{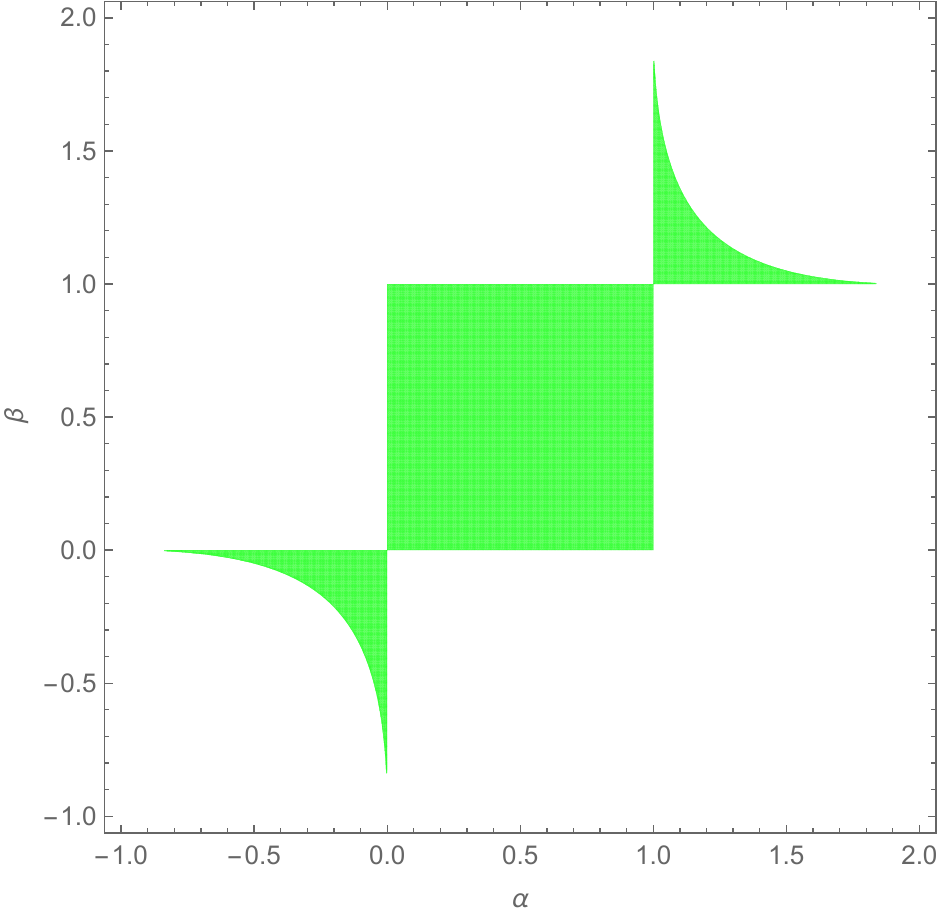}
\caption{The region in $\alpha,\beta$ plane such that $\hat{w}_{1,\alpha,\beta}(x)$ has only real negative zeros}
\label{fig:1}
\end{center}
\end{figure}
It is easy to see that part of the domain outside of the unit square corresponds to the values of $\alpha$, $\beta$ which do not satisfy Theorem~\ref{th:negative_zeros} (and thus also Brenti's theorem).

The following important result due to David\:G.\:Wagner \cite[Theorem~0.2]{wagner} allows generating more examples of $TP$ sequence of the form $\{F_m(n)\}_{n\ge0}$ with $F_m$ of any degree $m$.

\begin{theorem*}[Wagner]
Let $f, g \in \mathbb{R}[x]$ be real polynomials such that
the polynomials $\hat{w}_f$, $\hat{w}_g$ generated by
$$
\sum_{n=0}^\infty f(n) x^n=\frac{\hat{w}_f(x)}{(1-x)^{\mathrm{deg}(f)+1}}~\text{and}~
\sum_{n=0}^\infty g(n) x^n=\frac{\hat{w}_g(x)}{(1-x)^{\mathrm{deg}(g)+1}}$$
have only real negative zeros. Then $\hat{w}_{fg}$ generated by 
$$\sum_{n=0}^\infty f(n) g(n) x^n=\frac{\hat{w}_{fg}(x)}{(1-x)^{\mathrm{deg}(f)+\mathrm{deg}(g)+1}}
$$
also has only real negative zeros. 
\end{theorem*}

This leads to the following extensions of the previous examples.
\begin{example}\label{ex3}
From \eqref{exxx2} and  Wagner's theorem we conclude that for all $s\in\N$ all zeros of the polynomials $\hat{w}_{2s}$ of degree $2s$ generated by  
$$
\sum_{n=0}^{\infty} \Big(n^2 +\frac{1}{8}\Big)^s x^n= 
\frac{\hat{w}_{2s}(x)}{(1-x)^{2s+1}},
$$
are real and negative. Hence, the sequence $\left\{(n^2+1/8)^s\right\}_{n=0}^\infty$ is totally positive. 
\end{example}

\begin{example}\label{ex4}
From \eqref{exxx1} and  Wagner's Theorem we conclude that for all $s\in\N$ all zeros of the polynomials $\widebar{w}_{2s}$ of degree $2s$ defined by 
$$
\sum_{n=0}^{\infty}\Big(n+\frac{11}{10}\Big)^{2s}x^n=\frac{\widebar{w}_{2s}(x)}{(1-x)^{2s+1}},   
$$
are real and negative. Hence, the sequence $\left\{(n+11/10)^{2s}\right\}_{n=0}^\infty$ is totally positive.

 One can furthermore check that for every $s\in\N_{0}$ the polynomial $\widebar{w}_{2s+1}$ defined by
$$
\sum_{n=0}^\infty \Big(n+\frac{11}{10}\Big)^{2s+1} 
x^n= \frac{\widebar{w}_{2s+1}(x)}{(1-x)^{2s+2}},   
$$ 
has either nonnegative or complex zeros.  For example, 
$$
\sum_{n=0}^\infty \Big(n+\frac{11}{10}\Big)x^n=\frac{-x +11}{10(1-x)^2},
$$  
and  
$$
\sum_{n=0}^\infty \Big(n+\frac{11}{10}\Big)^3 x^n= \frac{-x^3 +\ldots+ \alpha}{1000(1-x)^4},~~ \alpha >0.
$$
Hence, the sequence $\left\{(n+11/10)^{2s+1}\right\}_{n=0}^{\infty}$ is not totally positive.
\end{example}  

To the best of our knowledge, the following potential extension of Wagner's theorem remains open.
\begin{open problem}\label{pr2}
Fix $a>0$ and let $f, g \in \mathbb{R}[x]$ be real polynomials such that
the polynomials $\hat{w}_f$, $\hat{w}_g$ generated by
$$
\sum_{n=0}^\infty f(n)\frac{(a)_n}{n!} x^n=\frac{\hat{w}_f(x)}{(1-x)^{\mathrm{deg}(f)+a}}~\text{and}~
\sum_{n=0}^\infty g(n)\frac{(a)_n}{n!}x^n=\frac{\hat{w}_g(x)}{(1-x)^{\mathrm{deg}(g)+a}}
$$
have only real negative zeros. Is it true that $\hat{w}_{fg}$ generated by 
$$
\sum_{n=0}^\infty f(n)g(n)\frac{(a)_n}{n!} x^n=\frac{\hat{w}_{fg}(x)}{(1-x)^{\mathrm{deg}(f)+\mathrm{deg}(g)+a}}
$$
also has only real negative zeros? 
\end{open problem}

\subsection{Positive  and mixed zeros}
\begin{lemma}\label{lm:Talpha-negative_s}
		Suppose $f$ is given by \eqref{eq:f-Pn}
		with the polynomial $P_n$ of degree $n$ having all its zeros in $(0,1)$. Then for all $s<0$ and 
		$\alpha>0$ the function $M_{\alpha}f$ has the form \eqref{eq:Talpha-f}, where the polynomial 
		$\hat{P}(x)$ has degree $n+1$ and all zeros in $(0,1)$.  Moreover,  
		\begin{equation}\label{eq:interlacing01}
			0<\hat{x}_{1}\le x_{1}\le\hat{x}_{2}\le\cdots\le \hat{x}_n\le x_{n}\le\hat{x}_{n+1}<1,
		\end{equation}
		where $x_j$ \emph{(}$\hat{x}_j$\emph{)} are zeros of $P_n$ \emph{(}$\hat{P}$\emph{)}.
		
		If $s>0$, $\alpha>0$  and  $s-\alpha-n<0$, $\hat{P}(x)$ has degree $n+1$ and all zeros but one in $(0,1)$.  More precisely,  
		\begin{equation}\label{eq:interlacing01-spos}
			0<\hat{x}_{1}\le x_{1}\le\hat{x}_{2}\le\cdots\le \hat{x}_n\le x_{n}<1<\hat{x}_{n+1},
		\end{equation}

		Moreover, if all zeros of $P_n(x)$ are simple, then so are all the zeros of $\hat{P}(x)$ and all inequalities in \eqref{eq:interlacing01}, \eqref{eq:interlacing01-spos} are strict.
	\end{lemma}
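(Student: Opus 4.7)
The plan is to reuse the derivation from the proof of Lemma~\ref{lm:Talpha-positive_s}, which establishes the identity
$$
\hat{P}(x)=x(1-x)P_n'(x)+(x(s-\alpha)+\alpha)P_n(x)
$$
without any sign restrictions on $s$ or $\alpha$. Its leading coefficient is $a_n(s-\alpha-n)$, where $a_n$ is the leading coefficient of $P_n$. This coefficient is nonzero in both cases treated by the lemma (in the first case $s<0<\alpha+n$; in the second it is assumed outright), so $\deg\hat{P}=n+1$. As in Lemma~\ref{lm:Talpha-positive_s}, I would first assume the zeros $0<x_1<x_2<\cdots<x_n<1$ of $P_n$ are simple and then recover the multiple-zero case by a perturbation argument; WLOG take $a_n>0$.

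The heart of the proof is a sign-count of $\hat{P}$ at the points $0,x_1,\dots,x_n,1$ (now numbered in ascending order, which reverses the convention of the previous lemma). From the identity above one reads off
$$
\hat{P}(0)=\alpha P_n(0),\qquad \hat{P}(x_j)=x_j(1-x_j)P_n'(x_j),\qquad \hat{P}(1)=sP_n(1).
$$
Since $P_n(0)=(-1)^n a_n\prod x_j$, $P_n(1)=a_n\prod(1-x_j)>0$, and $P_n'(x_j)=a_n\prod_{i\ne j}(x_j-x_i)$ contains exactly $n-j$ negative factors, the signs of $\hat{P}$ at $0,x_1,x_2,\ldots,x_n,1$ are respectively
$$
(-1)^n,\;(-1)^{n-1},\;(-1)^{n-2},\;\ldots,\;-1,\;+1,\;\mathrm{sgn}(s).
$$

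In the first case $s<0$, consecutive signs differ throughout, giving $n+1$ strict sign changes on $[0,1]$; since $\deg\hat{P}=n+1$ this accounts for all zeros, and they interlace the $x_j$ strictly as in \eqref{eq:interlacing01}. In the second case $s>0$ with $s-\alpha-n<0$, the sign at $1$ coincides with that at $x_n$, so only $n$ sign changes occur in $(0,1)$, locating $n$ simple zeros interlacing with the $x_j$ as in \eqref{eq:interlacing01-spos}. The remaining zero is placed by observing $\hat{P}(1)=sP_n(1)>0$ while the leading coefficient $a_n(s-\alpha-n)<0$ forces $\hat{P}(x)\to-\infty$ as $x\to+\infty$, yielding an odd number of zeros in $(1,\infty)$; the degree bound makes it exactly one. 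A further check that $\hat{P}$ has no zero in $(-\infty,0)$ follows from the same degree count, since we have already accounted for all $n+1$ zeros.

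No step is genuinely hard; the only real pitfall is bookkeeping the sign patterns carefully now that the zeros are enumerated in ascending rather than descending order, which reverses some parities compared to Lemma~\ref{lm:Talpha-positive_s}. The perturbation argument for the non-simple case is identical to the one at the end of the proof of Lemma~\ref{lm:Talpha-positive_s} and requires no modification.
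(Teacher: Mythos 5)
Your proposal is correct and follows essentially the same route as the paper: both use the identity $\hat{P}(x)=x(1-x)P_n'(x)+(x(s-\alpha)+\alpha)P_n(x)$ to read off the signs of $\hat{P}$ at $0$, at the zeros $x_j$, and at $1$, count sign changes to locate $n$ or $n+1$ zeros interlacing in $(0,1)$, and in the case $s>0$ place the last zero in $(1,\infty)$ via the sign of the leading coefficient $a_n(s-\alpha-n)$. The only cosmetic difference is that you normalize $a_n>0$ while the paper normalizes $P_n(0)>0$ and argues by subcases on the sign of $P_n'(x_n)$; the substance is identical, including the perturbation argument for multiple zeros.
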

	
	\begin{proof}
		The proof is again based on formula \eqref{eq:hatP}.  From $s<0$ and $\alpha>0$ we have $s-\alpha-n<0$ and the leading term of $\hat{P}$ given by $a_{n}(s-\alpha-n)x^{n+1}$ cannot vanish, so that $\hat{P}$ has degree $n+1$. We again assume that all zeros are simple and $P_n(0)>0$ (otherwise multiply \eqref{eq:f-Pn} by $-1$), so that also $\hat{P}(0)=\alpha{P_n(0)}>0$. To show that all zeros of $\hat{P}$ lie in $(0,1)$ we use the argument similar to that in the proof of Lemma~\ref{lm:Talpha-positive_s}. Indeed, if $P_n(x_1)=0$, then $\hat{P}(x_1)<0$ since $P_n'(x_1)<0$, resulting in $0<\hat{x}_1<x_1$. Similarly, $(-1)^{j}\hat{P}(x_{j})>0$ for $j=2,\ldots,n$, and we have established the chain of inequalities in \eqref{eq:interlacing01} from $0$ to $x_n$.  Finally, from \eqref{eq:hatP} $\hat{P}(1)=sP_n(1)$ implying that there is one more zero of $\hat{P}$ in the interval $(x_n,1)$.  Indeed if $P_n'(x_n)>0$, then $P_n(1)>0$, while $\hat{P}(x_n)>0$ and $\hat{P}(1)<0$ and, similarly, if $P_n'(x_n)<0$.

		Suppose now that  $s>0$, $\alpha>0$  and  $s-\alpha-n<0$, then the above argument remains valid expect for the ultimate step.  Now  if $P_n'(x_n)>0$, so that   $P_n(1)>0$, we get  $\hat{P}(x_n)>0$ and  $\hat{P}(1)=sP_n(1)>0$ which implies that there is an even number of zeros of   $\hat{P}(x)$ in $(x_n,1)$. This even number must be zero as only one zero of $\hat{P}(x)$ is not in $(0,x_n)$.  At the same time, $\hat{P}(x)<0$ for sufficiently large $x$ as the leading coefficient  
		$a_{n}(s-\alpha-n)<0$ ($a_n>0$ because $P_n(1)>0$ and there are no zeros of $P_n(x)$ on $[1,\infty)$). Hence, $\hat{x}_{n+1}>1$.  If, on the other hand,  
		$P_n'(x_n)<0$, so that   $P_n(1)<0$, we get  $\hat{P}(x_n)<0$ and  $\hat{P}(1)=sP_n(1)<0$ which implies that there is an even number of zeros of   $\hat{P}(x)$ on $(x_n,1)$.   This even number must be zero as only one zero of $\hat{P}(x)$ is not in $(0,x_n)$.  At the same time, $\hat{P}(x)>0$ for sufficiently large $x$ as the leading coefficient  
		$a_{n}(s-\alpha-n)>0$ ($a_n<0$ because $P_n(1)<0$ and there are no zeros of $P_n(x)$ in $[1,\infty)$). Hence, $\hat{x}_{n+1}>1$.
	\end{proof}
	
		Recursive application of the Lemma~\ref{lm:Talpha-negative_s} to decomposition \eqref{eq:Tproduct} yields  the following theorem. 
	\begin{theorem}\label{th:zeros-in-0-1} 
	Suppose $a+m<1$, and $\alpha_j>0$ for $j=1,\ldots,m$. Then  all zeros of the $F_m$-Eulerian polynomial $\hat{w}(a;F_m\vert\:x)$ defined in \eqref{eq:fa-Euler}, \eqref{eq:fa-Euler1} or, equivalently, of  the hypergeometric function on the left hand side of \eqref{eq:fa-Euler1} are real and lie in the interval $(0,1)$.  If $1\le a+m<2$, then all  zeros  but one lie in $(0,1)$ plus one zero lies in $[1,\infty)$. 
	\end{theorem}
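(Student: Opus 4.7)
The plan is to prove both parts of Theorem~\ref{th:zeros-in-0-1} by iteratively applying Lemma~\ref{lm:Talpha-negative_s} to the operator factorization \eqref{eq:Tproduct}. The base object is $(1-x)^{-a}=P_0(x)/(1-x)^{s_0}$ with $P_0\equiv 1$ and $s_0=a$; the polynomial $P_0$ vacuously satisfies the hypothesis ``all zeros in $(0,1)$'' of Lemma~\ref{lm:Talpha-negative_s} since it has no zeros. At the $k$-th step I will have produced the representation
\begin{equation*}
M_{\alpha_k}\cdots M_{\alpha_1}(1-x)^{-a}=\frac{\hat{P}_k(x)}{(1-x)^{s_k}},\qquad s_k=a+k,\qquad \deg\hat{P}_k=k,
\end{equation*}
and I need the zeros of $\hat{P}_k$ to lie in $(0,1)$ as long as possible, so that the very last step produces $\hat{w}(a;F_m\vert x)=\hat{P}_m(x)$.

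For the first claim, assume $a+m<1$. Then for every $k=1,\ldots,m$ the current exponent satisfies $s_{k-1}=a+k-1\le a+m-1<0$, while $\alpha_k>0$ by hypothesis, so the first part of Lemma~\ref{lm:Talpha-negative_s} applies. By that lemma each newly produced polynomial $\hat{P}_k$ has exactly one more zero than $\hat{P}_{k-1}$, all of them in $(0,1)$, interlacing the previous ones according to \eqref{eq:interlacing01}. Induction on $k$ therefore delivers the claim about $\hat{w}=\hat{P}_m$.

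For the second claim, $1\le a+m<2$, the same induction using the first part of Lemma~\ref{lm:Talpha-negative_s} still runs for steps $k=1,\ldots,m-1$, because $s_{k-1}\le a+m-2<0$; after $m-1$ steps $\hat{P}_{m-1}$ has degree $m-1$ with all zeros in $(0,1)$. In the last step the exponent becomes $s_{m-1}=a+m-1\ge 0$, which forces me to switch to the second part of Lemma~\ref{lm:Talpha-negative_s} (the $s>0$, $\alpha>0$ case). To invoke it, I need to check that $s_{m-1}-\alpha_m-(m-1)<0$, i.e.\ $\alpha_m>a$; then \eqref{eq:interlacing01-spos} yields $m-1$ zeros in $(0,1)$ plus one in $(1,\infty)\subset[1,\infty)$. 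The boundary case $s_{m-1}=0$ (equivalently $a+m=1$) is handled directly from the identity \eqref{eq:hatP} applied to $\hat{P}_{m-1}$: with $s=0$ the right-hand side factors as $(1-x)\bigl[x\hat{P}_{m-1}'(x)+\alpha_m\hat{P}_{m-1}(x)\bigr]$, producing the required zero at $x=1$, and the interlacing of the remaining $m-1$ zeros inside $(0,1)$ is read off from Rolle's theorem applied to $x^{\alpha_m}\hat{P}_{m-1}(x)$.

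The main obstacle I expect is the verification of $\alpha_m>a$ needed in the final step of the second case: this is automatic when $a\le 0$ (hence for all $m\ge 2$, since then $a<2-m\le 0$), but for $m=1$ with $a\in(0,1)$ the hypothesis $\alpha_1>0$ alone is not enough and a closer inspection of the degree condition or an additional hypothesis is required. Once this is settled, the rest is a clean recursion, and the interlacing assertions follow from iterating the corresponding interlacing statements in the two parts of Lemma~\ref{lm:Talpha-negative_s}.
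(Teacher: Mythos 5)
Your proof follows exactly the paper's route: recursive application of Lemma~\ref{lm:Talpha-negative_s} to the factorization \eqref{eq:Tproduct}, switching to the $s>0$ case of that lemma at the last step when $1\le a+m<2$, and your treatment of the boundary case $a+m=1$ via the factor $(1-x)$ in \eqref{eq:hatP} agrees with the paper's remark that one zero then equals $1$. The obstacle you flag in the final step --- that $s_{m-1}-\alpha_m-(m-1)<0$ amounts to $\alpha_m>a$, which is automatic for $m\ge2$ but not for $m=1$ with $a\in(0,1)$ --- is a genuine edge case that the paper's own proof silently passes over (indeed for $m=1$ and $0<\alpha_1<a<1$ the single zero $\alpha_1/(\alpha_1-a)$ is negative, so the second claim implicitly requires $m\ge2$); spotting this is a point in your favour rather than a gap in your argument.
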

	
	\begin{proof}
		Indeed, retaining notation from the proof of Theorem~\ref{th:negative_zeros} under the assumption $a+m<1$ we have $s_{0}, s_{1},\ldots, s_{m-1}<0$ and the claim follows by recursive application of Lemma~\ref{lm:Talpha-negative_s} to decomposition \eqref{eq:Tproduct}.  If $1<a+m<2$ we have 
		 $s_{0}, s_{1},\ldots, s_{m-2}<0$, so that after $m-1$ steps the numerator polynomial has all its zeros in $(0,1)$ by Lemma~\ref{lm:Talpha-negative_s}. If now  $0<s_{m-1}<1$, $s_{m-1}-\alpha_m-(m-1)<0$ and the conclusion follows by \eqref{eq:interlacing01-spos}. If $a+m=1$, we have $s_{m-1}=0$ and one zero of  $\hat{w}(a;F_m\vert\:x)$ is equal to $1$.
	\end{proof}
		
	\begin{lemma}\label{lm:Talpha-zeros-with-gap}
		Suppose $f$ in given by \eqref{eq:f-Pn}
		with the polynomial $P_n$ of degree $n$ having all real zeros none of which lie in  $[0,1]$.  Suppose further that $s$ and $\alpha$ have the same sign while $s-n-\alpha>0$. Then the polynomial $\hat{P}(x)$ has degree $n+1$ and all its zeros are real and lie outside $[0,1]$. The zeros of $P_n$ and $\hat{P}$ interlace.   
		
		If $s,\alpha>0$, then the number of zeros of $P_n$ in $(1,\infty)$ is preserved by $\hat{P}$.
		
		If $s,\alpha<0$, then the number of zeros of $P_n$ in $(-\infty,0)$ is preserved by $\hat{P}$.
		
		If $s,\alpha$ have opposite signs, all zeros of $\hat{P}$ are still real, with one zero lying in $(0,1)$, and the number of zeros in $(-\infty,0)$ and $(1,\infty)$ coinciding with those for $P_n$. 
		
		Moreover, if all zeros of $P_n(x)$ are simple, then so are all the zeros of $\hat{P}(x)$.
	\end{lemma}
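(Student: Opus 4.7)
The plan is to reuse the explicit formula
$\hat{P}(x)=x(1-x)P_n'(x)+(x(s-\alpha)+\alpha)P_n(x)$
derived in the proof of Lemma~\ref{lm:Talpha-positive_s} and imitate the sign-counting argument used there, now adapted to the situation where the zeros of $P_n$ lie outside $[0,1]$. First I would note that the leading coefficient of $\hat{P}$ equals $a_n(s-\alpha-n)$, which is nonzero by hypothesis, so $\deg\hat{P}=n+1$. As in Lemma~\ref{lm:Talpha-positive_s}, I would work under the temporary assumption that all zeros of $P_n$ are simple and recover the general case at the end by the same small-perturbation argument.

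Next, after normalizing $a_n>0$, let $k$ denote the number of zeros of $P_n$ lying in $(-\infty,0)$ so that $n-k$ zeros lie in $(1,\infty)$, and label them $x_1<\cdots<x_k<0<1<x_{k+1}<\cdots<x_n$. The three essential evaluations are $\hat{P}(0)=\alpha P_n(0)$, $\hat{P}(1)=s P_n(1)$ and $\hat{P}(x_j)=x_j(1-x_j)P_n'(x_j)$. Since $x_j\notin[0,1]$, the factor $x_j(1-x_j)$ is strictly negative, so $\sign\hat{P}(x_j)$ alternates with $j$ exactly as $\sign P_n'(x_j)$ does. Combined with the sign of the leading coefficient of $\hat{P}$ at $\pm\infty$, an intermediate-value plus degree-counting argument of the type used in Lemma~\ref{lm:Talpha-positive_s} places exactly one zero of $\hat{P}$ in each of the gaps $(-\infty,x_1)$, $(x_j,x_{j+1})$ for $j=1,\ldots,n-1$, and $(x_n,\infty)$, giving the claimed interlacing and accounting for all $n+1$ zeros.

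The delicate step is locating the single zero in the one gap $(x_k,x_{k+1})$ that straddles $[0,1]$. Here I would split the gap into $(x_k,0)$, $(0,1)$ and $(1,x_{k+1})$ and tabulate $\sign\hat{P}$ at the four endpoints $x_k,0,1,x_{k+1}$ as a function of $\sign\alpha$, $\sign s$ and the parities of $k$ and $n$. A short case analysis then shows that the lone zero falls in $(x_k,0)$ when $s,\alpha>0$, in $(1,x_{k+1})$ when $s,\alpha<0$, and strictly inside $(0,1)$ when $s$ and $\alpha$ have opposite signs. Counting the resulting zeros on the two half-lines $(-\infty,0)$ and $(1,\infty)$ in each subcase then yields precisely the three preservation statements of the lemma.

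The main obstacle I anticipate is the bookkeeping of signs in this three-way subcase analysis, together with verifying that the degree count rules out any extra zeros of $\hat{P}$ inside the subintervals of $(x_k,x_{k+1})$ so that the extra zero is unambiguously assigned. Once the signs of $\hat{P}$ at $x_k,0,1,x_{k+1}$ are laid out in a small table, each subcase is closed by a single application of the intermediate value theorem, and the simplicity of zeros of $\hat{P}$ together with strictness of all interlacing inequalities follow for free from the sign strictness.
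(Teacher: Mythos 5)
Your treatment of the two same-sign cases is essentially the paper's own proof: the same formula $\hat{P}(x)=x(1-x)P_n'(x)+(x(s-\alpha)+\alpha)P_n(x)$, the same ordering $x_1<\cdots<x_k<0<1<x_{k+1}<\cdots<x_n$, the same evaluations $\hat{P}(0)=\alpha P_n(0)$, $\hat{P}(1)=sP_n(1)$, $\hat{P}(x_j)=x_j(1-x_j)P_n'(x_j)$, and the same end-behaviour-plus-degree count. That part is correct, including the placement of the extra zero in $(x_k,0)$ for $s,\alpha>0$ and in $(1,x_{k+1})$ for $s,\alpha<0$. (The paper, incidentally, only writes out these two cases and never proves the opposite-sign claim, so there you are on your own.)

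The opposite-sign subcase is where your sketch goes wrong. You assert that the degree count "places exactly one zero of $\hat{P}$ in each of the gaps $(-\infty,x_1)$, $(x_j,x_{j+1})$, $(x_n,\infty)$" and that the issue is merely deciding which of $(x_k,0)$, $(0,1)$, $(1,x_{k+1})$ receives "the lone zero." Carry out your own sign table for, say, $\alpha>0$, $s<0$ (so the leading coefficient $a_n(s-\alpha-n)$ is automatically negative when $a_n>0$): you find $\sign\hat{P}(x_k)=-\sign P_n(0)$, $\sign\hat{P}(0)=\sign P_n(0)$, $\sign\hat{P}(1)=-\sign P_n(0)$, $\sign\hat{P}(x_{k+1})=\sign P_n(0)$, i.e.\ a sign change across \emph{each} of the three subintervals, forcing at least three zeros in the straddling gap $(x_k,x_{k+1})$. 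This is not a contradiction with the degree, because in this case $\sign\hat{P}(\pm\infty)$ agrees with $\sign\hat{P}(x_1)$ and $\sign\hat{P}(x_n)$ respectively, so the two unbounded gaps contain \emph{no} zeros; the total is still $(k-1)+3+(n-k-1)=n+1$, and the counts in $(-\infty,0)$ and $(1,\infty)$ come out to $k$ and $n-k$ as the lemma claims. So the lemma's conclusion survives, but your claimed zero distribution ("one per gap", with the extra zero "unambiguously assigned" to a single subinterval) is false here, and if taken literally it would give $n+3$ zeros. You need to rerun the end-behaviour analysis separately in this case and accept that the straddling gap absorbs three zeros while the outer gaps absorb none; note also that genuine interlacing fails in this case, which is why the lemma only asserts it for same-sign $s,\alpha$.
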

	
	\begin{proof}
		As	$s-\alpha-n>0$ the leading term of $\hat{P}$ given by $a_{n}(s-\alpha-n)x^{n+1}$ cannot vanish, so that $\hat{P}$ has degree $n+1$.  We can also assume without loss of generality that $a_{n}>0$.
		Suppose first that the zeros of $P_n$ are simple and ordered as follows:
		$$
		x_{1}<x_{2}<\cdots<x_{k}<0<1<x_{k+1}<\cdots<x_{n}.
		$$	
		The proof will be divided in two cases.
		
		\textbf{Case I}. $s,\alpha>0$, $s-n-\alpha>0$.  If $P_n(0)>0$, then $P_{n}'(x_{k})>0$ implying that $\hat{P}(0)=\alpha P_n(0)>0$ and  $\hat{P}(x_{k})<0$ by \eqref{eq:hatP} and hence $\hat{P}(x)$ has a zero in $(x_{k},0)$.  Similarly, if  $P_n(0)<0$, then $P_{n}'(x_{k})<0$ implying that $\hat{P}(0)=\alpha P_n(0)<0$ and  $\hat{P}(x_{k})>0$ by \eqref{eq:hatP} and hence $\hat{P}(x)$ again has a zero in $(x_{k},0)$.   Next, for $j=1,\ldots,k-1$ we have $\sign(P_{n}'(x_{j})P_{n}'(x_{j+1}))=-1$, so that   $\sign(\hat{P}(x_{j})\hat{P}(x_{j+1}))=-1$ by \eqref{eq:hatP} and there is a zero of $\hat{P}$ in each interval $[x_{j},x_{j+1}]$, $j=1,\ldots,k-1$. 
		As the leading coefficient of $\hat{P}$ is positive, $a_{n}(s-\alpha-n)>0$, for  $x\to-\infty$ we have 
		$$
		\sign(\hat{P}(-\infty))=(-1)^{n+1}=-\sign{P_{n}(-\infty)},
		$$
		which implies that $\hat{P}(x)$ has a zero in $(-\infty,x_1)$. Indeed, if $P_n(-\infty)>0$, then $P_n'(x_1)<0$, so that $\hat{P}(x_1)>0$ while $\hat{P}(-\infty)<0$; similarly, if $P_n(-\infty)<0$, then $P_n'(x_1)>0$, so that $\hat{P}(x_1)<0$, while $\hat{P}(-\infty)>0$.  
		Hence, there are at least $k+1$ zeros of $\hat{P}$ in $(-\infty,0)$. 
		
		In a similar fashion, for $j=k,\ldots,n-1$ we have 
		$$
		\sign(P_{n}'(x_{j})P_{n}'(x_{j+1}))=-1~\Rightarrow~\sign(\hat{P}(x_{j})\hat{P}(x_{j+1}))=-1
		$$
		by \eqref{eq:hatP} and there is a zero of $\hat{P}$ in each interval $[x_{j},x_{j+1}]$, $j=k+1,\ldots,n-1$ totaling at least $n-k-1$ zeros of $\hat{P}$ in $(x_{k+1},x_{n})$.  Hence, there are at least $n$ zeros of $\hat{P}$ in $(-\infty,0)\cup(x_{k+1},x_{n})$. Finally, there is one zero in $(x_{n},\infty)$, as $P_n(\infty)$ is positive and so is $\hat{P}(\infty)$, but $P_n'(x_n)>0$ implying that $\hat{P}(x_n)<0$.

		\textbf{Case II}. $s,\alpha<0$, $s-n-\alpha>0$.  If $P_n(1)>0$, then $P_{n}'(x_{k+1})<0$ implying that $\hat{P}(1)=s P_n(1)<0$ and  $\hat{P}(x_{k+1})>0$ by \eqref{eq:hatP} and hence $\hat{P}(x)$ has a zero in $(1,x_{k+1})$.  Similarly, if  $P_n(1)<0$, then $P_{n}'(x_{k+1})>0$ implying that $\hat{P}(1)=s P_n(1)>0$ and  $\hat{P}(x_{k+1})<0$ by \eqref{eq:hatP} so that $\hat{P}(x)$ again has a zero in $(1, x_{k+1})$.   Next, for $j=k+1,\ldots,n-1$ we have $\sign(P_{n}'(x_{j})P_{n}'(x_{j+1}))=-1$, so that   $\sign(\hat{P}(x_{j})\hat{P}(x_{j+1}))=-1$ by \eqref{eq:hatP} and there is a zero of $\hat{P}$ in each interval $[x_{j},x_{j+1}]$, $j=k+1,\ldots,n-1$.  Further,  there is one zero of $\hat{P}$ in $(x_{n},\infty)$. Indeed, $P_n(\infty)$ is positive and the leading coefficient $a_{n}(s-\alpha-n)$ of $\hat{P}$ is positive, so that  $\hat{P}(\infty)>0$, but $P_n'(x_n)>0$ implying that $\hat{P}(x_n)<0$ \eqref{eq:hatP}. Hence, there are at least $n-k+1$ zeros of $\hat{P}(x)$ in $(1,\infty)$.  
		
		In a similar fashion, for $j=1,\ldots,k-1$ we have 
		$$
		\sign(P_{n}'(x_{j})P_{n}'(x_{j+1}))=-1~\Rightarrow~\sign(\hat{P}(x_{j})\hat{P}(x_{j+1}))=-1
		$$
		by \eqref{eq:hatP} and there is a zero of $\hat{P}(x)$ in each interval $[x_{j},x_{j+1}]$, $j=1,\ldots,k-1$ totaling at least $k-1$ zeros of $\hat{P}$ in $(x_{1},x_{k})$.  Hence, there are at least $n$ zeros of $\hat{P}$ in $(x_{1},x_{k})\cup(1,\infty)$. Finally, there is one zero in $(-\infty,x_{1})$. Indeed, $\sign(P_n(-\infty))=-\sign(\hat{P}(-\infty))$ because both polynomials have positive leading coefficient. Hence, when $P_n(-\infty)>0$, $P_n'(x_1)<0$ and so $\hat{P}(x_1)>0$, while $\hat{P}(-\infty)<0$; similarly, when  $P_n(-\infty)<0$, $P_n'(x_1)>0$ and so $\hat{P}(x_1)<0$, while $\hat{P}(-\infty)>0$.  
	\end{proof}
	
	Recursive application of the Lemma~\ref{lm:Talpha-zeros-with-gap} to decomposition \eqref{eq:Tproduct} yields  the following theorem. 
	\begin{theorem}\label{th:zeros_greater_1} 
		Suppose $m\ge2$ and  $\alpha_j<a<1-m$ for $j=1,\ldots,m$. Then all zeros of the $F_m$-Eulerian polynomial $\hat{w}(a;F_m\vert\:x)$ defined in \eqref{eq:fa-Euler}, \eqref{eq:fa-Euler1} or, equivalently, of  the hypergeometric function on the left hand side of \eqref{eq:fa-Euler1} are real and lie in the interval $(1,\infty)$.  If $1-m<a<2-m$ and  $\alpha_j<a$ for $j=1,\ldots,m$, then all zeros but one  lie in $(1,\infty)$ plus one zero lies in $(0,1)$. 
	\end{theorem}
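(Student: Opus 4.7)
The plan is to apply Lemma~\ref{lm:Talpha-zeros-with-gap} recursively to the decomposition \eqref{eq:Tproduct}, in direct analogy with the proofs of Theorems~\ref{th:negative_zeros} and \ref{th:zeros-in-0-1}.  Starting from $f_{0}(x)=(1-x)^{-a}$, which is of the form \eqref{eq:f-Pn} with $P_{0}\equiv1$, $n_{0}=0$ and $s_{0}=a$, each subsequent application of $M_{\alpha_{k}}$ produces a rational function $f_{k}(x)=P_{n_{k}}(x)/(1-x)^{s_{k}}$ with $s_{k}=a+k$ and (provided the leading term does not vanish) $n_{k}=k$.

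The first step is to check that the hypotheses of Lemma~\ref{lm:Talpha-zeros-with-gap} hold at every stage.  The condition $s_{k-1}-n_{k-1}-\alpha_{k}=a-\alpha_{k}>0$ is the standing assumption $\alpha_{k}<a$; in particular the degree strictly increases at each step, so indeed $n_{k}=k$.  For the sign condition: under the hypothesis $a<1-m$ of the first claim, $s_{k-1}=a+k-1\le a+m-1<0$ for $k=1,\ldots,m$, while $\alpha_{k}<a<0$, so both $s_{k-1}$ and $\alpha_{k}$ are negative throughout.  Under the hypothesis $1-m<a<2-m$ of the second claim, the same negativity holds for $k=1,\ldots,m-1$, but at the last step $s_{m-1}=a+m-1\in(0,1)$ is positive while $\alpha_{m}<a<0$ is negative, placing us in the opposite-sign regime.

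Next I would use the ``preserved in $(-\infty,0)$'' clause of Lemma~\ref{lm:Talpha-zeros-with-gap}.  Since $P_{0}\equiv1$ has no zeros at all, and in particular none in $(-\infty,0)$, an immediate induction on $k$ shows that in the all-negative case $P_{n_{k}}$ has no zeros in $(-\infty,0)$ for every $k=1,\ldots,m$.  Combined with the Lemma's assertion that all zeros of $P_{n_{k}}$ are real and lie outside $[0,1]$, this forces all zeros into $(1,\infty)$, yielding the first claim with $k=m$.

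For the second claim, the same induction handles $k=1,\ldots,m-1$ and delivers $P_{n_{m-1}}$ of degree $m-1$ with all its zeros in $(1,\infty)$.  At the final step we invoke the opposite-sign clause of Lemma~\ref{lm:Talpha-zeros-with-gap}: the zero counts of $P_{n_{m}}$ in $(-\infty,0)$ and in $(1,\infty)$ coincide with those of $P_{n_{m-1}}$ (namely $0$ and $m-1$), and exactly one additional zero lies in $(0,1)$, giving the stated structure.  The argument is essentially bookkeeping; the only point that requires care is verifying the sign inequalities for $s_{k-1}$ interact correctly with the transition from case (II) to the mixed-sign case at $k=m$ in the second claim, which is where the threshold $1-m<a<2-m$ enters.
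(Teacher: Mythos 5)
Your proposal is correct and follows essentially the same route as the paper: recursive application of Lemma~\ref{lm:Talpha-zeros-with-gap} to the decomposition \eqref{eq:Tproduct}, with the sign bookkeeping $s_{k-1}=a+k-1$, $s_{k-1}-n_{k-1}-\alpha_k=a-\alpha_k>0$, the ``preserved in $(-\infty,0)$'' clause forcing all new zeros into $(1,\infty)$ in the first case, and the opposite-sign clause at the final step handling the case $1-m<a<2-m$. The paper's proof is a terser version of exactly this argument.
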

	
	\begin{proof}
		Indeed, retaining notation from the proof of Theorem~\ref{th:negative_zeros} under the assumption $\alpha_j<a<1-m$ for $j=1,\ldots,m$ we have $s_0,s_1,\ldots,s_{m-1}<0$, and $s_k-n_k=a>\alpha_{k+1}$ for $k=0,\ldots,m-1$.  As we start with constant polynomial $P_0=1$ without zeros in $(-\infty,0)$, then by  Lemma~\ref{lm:Talpha-zeros-with-gap} in each step the number of zeros in $(1,\infty)$ is increased by $1$, so that all zeros of $\hat{w}(a;F_m\vert\:x)$ lie in $(1,\infty)$.   If $1-m<a<2-m$, then  $s_0,s_1,\ldots,s_{m-2}<0$ and $s_{m-1}>0$ while $\alpha_{m}<0$, so that the conclusion follows by the third claim of Lemma~\ref{lm:Talpha-zeros-with-gap}.
	\end{proof}

\section{Hypergeometric representations and change of basis}
Applying the Cauchy product to the generating relation \eqref{eq:fa-Euler1} and manipulating the Pochhammer symbols, we obtain:
\begin{equation}\label{eq:wh-defined}
\hat{w}\big(a;F_m\big\vert\:x\big)=(1-x)^{a+m}{}_{r+1}F_{r}\left.\!\!\left(\!\begin{matrix}a, \f+\m\\\f\end{matrix}\right\vert x\right)
=\sum\limits_{k=0}^{m}\frac{x^k}{k!}(-m-a)_{k}\cdot
{}_{r+2}F_{r+1}\!\left(\begin{matrix}-k,a,\f+\m\\1+m-k+a,\f\end{matrix}\right),
\end{equation}
where we omitted the argument $1$ from the notation of the generalized hypergeometric function on the right hand side -- a convention to be used from here onward.  
This implies that
\begin{equation}\label{eq:hatwPm}
\hat{w}\big(a;F_m\big\vert\:x\big)=\frac{(-1)^m}{m!}
	\sum\limits_{n=0}^{m}\frac{(-m)_n}{n!}
	\hat{P}_{m}(n)x^n=\frac{(-1)^m}{m!}F\!\left(\begin{matrix}-m\\{-} \end{matrix}\:\bigg\vert\:\hat{P}_m\bigg\vert\: x\right),
\end{equation}
where  $\hat{P}_{m}(t)$ is  a polynomial of degree $m$
satisfying the interpolation conditions
$$
\hat{P}_{m}(n)=\frac{(-m-a)_{n}}{(-m)_{n}}
{}_{r+2}F_{r+1}\!\left(\begin{matrix}-n,a,\f+\m\\1+m-n+a,\f\end{matrix}\right), ~~n=0,\ldots,m.
$$
The polynomial $\hat{P}_{m}(t)$ can be constructed using Lagrange's or Newton's formula with equidistant nodes as given, for instance, in  \cite[Lemma~1]{Karp2025}.  Lagrange's form  according to \cite[(2)]{Karp2025} reads (after some simplification)
$$
\hat{P}_{m}(t)=\sum\limits_{k=0}^{m}\binom{m+a}{k}(-t)_k(t-m)_{m-k}\cdot{}_{r+2}F_{r+1}\!\left(\begin{matrix}-k,a,\f+\m\\1+m-k+a,\f\end{matrix}\right).
$$
We will show now that there exists a polynomial of degree $m-\max(m_1,\ldots,m_r)$ satisfying the same interpolation conditions. This is achieved through connecting the   
generating relation \eqref{eq:fa-Euler1} with the generalized Euler transformation due to Miller and Paris given by  \cite[Theorem~4]{MP2013}:  
\begin{equation}\label{eq:MP2general}
F\!\left(\begin{matrix}\delta,\epsilon\\\rho\end{matrix}\,\bigg\vert\,H_{\omega}\,\bigg\vert\, x\right)\!=\!{}_{r+2}F_{r+1}\left(\begin{matrix}\delta, \epsilon,\nuu+\omm\\\rho,\nuu\end{matrix}\,\bigg\vert\,x\right)
		\!=\!(1-x)^{\rho-\delta-\epsilon-\omega}F\left(\!\begin{matrix}\rho-\delta-\omega, \rho-\epsilon-\omega\\\rho\end{matrix}\,\bigg\vert\,\hat{Q}_{\omega}\,\bigg\vert\,x\right),
	\end{equation}    
where $\nuu=(\nu_1,\ldots,\nu_r)$ is a complex vector and $\omm=(\omega_1,\ldots,\omega_r)$ in a vector of positive integers, $\omega=\omega_1+\cdots+\omega_r$. Formula \eqref{eq:MP2general} is valid for $(\rho-\epsilon-\omega)_{\omega}\ne0$, $(\rho-\delta-\omega)_{\omega}\ne0$ and $(1+\delta+\epsilon-\rho)_{\omega}\ne0$. Here
$$
H_{\omega}(t)=\frac{(\nuu+t)_{\omm}}{(\nuu)_{\omm}},
$$
similarly to the notation in \eqref{eq:Fm-defined} and \eqref{eq:fa-Euler1}.  The characteristic polynomial $\hat{Q}_{\omega}$, also of degree $\omega$, can be written in a variety of different, but equivalent forms given in  \cite[(19), (20), (24), (25)]{Karp2025}.  To unify and conceptualize those forms the first author introduced in \cite[Section~2.2]{Karp2025} the notion of the second Miller-Paris operator $\hat{T}_{\omega}(\delta,\epsilon;\rho)$ which is precisely the mapping from $H_{\omega}$ to $\hat{Q}_{\omega}$ in \eqref{eq:MP2general}:  $\hat{Q}_{\omega}=\hat{T}_{\omega}(\delta,\epsilon;\rho)H_{\omega}$.  We will cite some of its explicit forms below as needed. 

Denote by $\a_{[i]}$  vector $\a$ with deleted $i$-th component. If we put in \eqref{eq:MP2general}  $\rho=f_1$, $\delta=a$, $\epsilon=f_1+m_1$, $\nuu=\f_{[1]}$, $\omm=\m_{[1]}$, $\omega=m_2+\cdots+m_r$ and compare it with   \eqref{eq:fa-Euler1}, we obtain an expansion of $\hat{w}$ in monomial basis: 
\begin{proposition}\label{pr:fEuler-monomial}
Suppose, without loss of generality, that $m_1=\max(m_1,\ldots,m_r)$ and put $\omega=m-m_1$.  Then assuming $a>0$ and  $(f_1-a-\omega)_{\omega}\ne0$ we have
\begin{equation}\label{eq:wh-formula}
\hat{w}\big(a;F_m\big\vert\:x\big)=\sum\limits_{n=0}^{m}\frac{(-m)_{n}(f_1-a-\omega)_{n}}{(f_1)_{n}n!}\hat{Q}_{\omega}(n)x^n
={}_{\omega+2}F_{\omega+1}\left(\!\begin{matrix}-m, f_1-a-\omega, \etta+1 \\f_1,\etta\end{matrix}\,\bigg\vert\,x\right),
\end{equation} 
where
$$
\big(\f_{[1]}\big)_{\m_{[1]}}\hat{Q}_{\omega}(t)=\Big[\hat{T}_{\omega}(a,f_1+m_1;f_1)\big(\f_{[1]}+\cdot\big)_{\m_{[1]}}\Big](t)
$$
and  $\etta=(\eta_1,\ldots,\eta_{\omega})$ are the roots of $\hat{Q}_{\omega}(-t)$.  If $a<0$ formula \eqref{eq:wh-formula} still holds under additional assumption $(1+a+m_1)_{\omega}\ne0$.
\end{proposition}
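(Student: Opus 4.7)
The plan is to derive \eqref{eq:wh-formula} as a direct consequence of the Miller-Paris transformation \eqref{eq:MP2general} applied to the generating relation \eqref{eq:fa-Euler1}. In \eqref{eq:MP2general} I take $\rho = f_1$, $\delta = a$, $\epsilon = f_1 + m_1$, $\nuu = \f_{[1]}$, and $\omm = \m_{[1]}$, so that $\omega = m_2 + \cdots + m_r = m - m_1$. Under these choices the numerator parameters $\delta, \epsilon, \nuu+\omm$ on the left-hand side of \eqref{eq:MP2general} assemble into $(a, \f+\m)$ and the denominator parameters $\rho, \nuu$ into $\f$, so that the left-hand side becomes exactly the ${}_{r+1}F_r$ appearing in \eqref{eq:fa-Euler1}. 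On the right-hand side the exponent simplifies as $\rho - \delta - \epsilon - \omega = -a - m$, while the two new upper parameters become $\rho - \delta - \omega = f_1 - a - \omega$ and $\rho - \epsilon - \omega = -m$.

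Multiplying the resulting identity by $(1-x)^{a+m}$ cancels the factor $(1-x)^{-a-m}$ and, comparing with the definition of $\hat{w}$ in \eqref{eq:fa-Euler1}, yields, after reading off the $x^n$ coefficient by the Cauchy product,
\[
\hat{w}\!\left(a; F_m\,\vert\,x\right) = \sum_{n=0}^{\infty} \frac{(-m)_n (f_1-a-\omega)_n}{(f_1)_n \, n!} \, \hat{Q}_\omega(n) \, x^n,
\]
where $\hat{Q}_\omega$ is the image of $H_\omega(t) = (\f_{[1]}+t)_{\m_{[1]}}/(\f_{[1]})_{\m_{[1]}}$ under the second Miller-Paris operator $\hat{T}_\omega(a, f_1+m_1; f_1)$ introduced in \cite[Section 2.2]{Karp2025}. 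The series terminates at $n = m$ since $(-m)_n = 0$ for $n > m$, producing the first equality of \eqref{eq:wh-formula}. Writing $\hat{Q}_\omega$ in its $\hat{Q}_\omega(0)=1$ normalized factored form $\hat{Q}_\omega(t) = \prod_{j=1}^{\omega}(1 + t/\eta_j)$ and applying the identity $(n+\eta_j)/\eta_j = (\eta_j+1)_n/(\eta_j)_n$ then converts the sum into the stated ${}_{\omega+2}F_{\omega+1}$ with parameter vector $\etta$.

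The only substantive task is to verify the admissibility of \eqref{eq:MP2general} under this parameter match, i.e. the non-vanishing of $(\rho-\epsilon-\omega)_\omega$, $(\rho-\delta-\omega)_\omega$, and $(1+\delta+\epsilon-\rho)_\omega$. The first equals $(-m)_\omega$, which is non-zero because the convention $m_1 = \max(m_1,\ldots,m_r) \ge 1$ forces $\omega = m - m_1 \le m - 1$. The second equals $(f_1 - a - \omega)_\omega$, which is exactly the hypothesis of the proposition. The third is $(1+a+m_1)_\omega$: for $a > 0$ this is automatic since $1+a+m_1 > 0$, while for $a < 0$ it becomes the supplementary hypothesis $(1+a+m_1)_\omega \ne 0$ explicitly imposed in the proposition. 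No deeper obstacle appears; the proof reduces essentially to parameter bookkeeping combined with the explicit form of the operator $\hat{T}_\omega$ imported from \cite{Karp2025}.
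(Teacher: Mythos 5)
Your proposal is correct and follows essentially the same route as the paper: the paper itself obtains the proposition by substituting $\rho=f_1$, $\delta=a$, $\epsilon=f_1+m_1$, $\nuu=\f_{[1]}$, $\omm=\m_{[1]}$ into the Miller--Paris transformation \eqref{eq:MP2general}, comparing with \eqref{eq:fa-Euler1}, and checking the same three non-vanishing conditions, exactly as you do. Your verification that $(\rho-\epsilon-\omega)_{\omega}=(-m)_{\omega}\ne0$ and that $(1+a+m_1)_{\omega}\ne0$ is automatic for $a>0$ matches the remark following the proposition in the paper.
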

Note that the condition $(\rho-\epsilon-\omega)_{\omega}\ne0$ required for \eqref{eq:MP2general} holds automatically and  $(1+\delta+\epsilon-\rho)_{\omega}\ne0$ holds automatically  if $a=\delta>0$, while the condition $(\rho-\delta-\omega)_{\omega}\ne0$ becomes  $(f_1-a-\omega)_{\omega}\ne0$.

Comparing \eqref{eq:wh-formula} with \eqref{eq:hatwPm} we conclude that
$$
\hat{P}_{m}(n)=\frac{(f_1-a-\omega)_{n}}{(f_1)_{n}}\hat{Q}_{\omega}(n).
$$
Hence, the degree of the polynomial interpolating the coefficients of $\hat{w}(x)$ is reduced by $m_1$, where we can choose 
$m_1=\max(m_1,\ldots,m_r)$ due to symmetry.  

Starting with the left hand side of \eqref{eq:fa-Euler1} we can naturally define another polynomial, which we denote by $w$, as follows. Using Euler's transformation \cite[(4)]{Karp2025}, we  get
\begin{equation}\label{eq:MP1polynomial}
	(1-x)^{a}{}_{r+1}F_{r}\!\!\left(\!\begin{matrix}a, \f+\m\\\f\end{matrix}\bigg\vert\,x\right)
	=\sum\limits_{k=0}^{m}\frac{(a)_k}{k!}
	{}_{r+1}F_{r}\!\left(\begin{matrix}-k,\f+\m\\\f\end{matrix}\right)\Big(\frac{x}{x-1}\Big)^k=:w\Big(a;F_m\Big\vert\:\frac{x}{x-1}\Big),
\end{equation}
where the last equality is the definition of the polynomial $w$  of degree $m$ in the variable in $x/(x-1)$ (note that ${}_{r+1}F_{r}(-k,\f+\m;\f)$ vanishes for $k>m$ by \cite[Corollary~2]{MP2012}). This polynomial can be written as 
	$$
	w(y)=\sum_{k=0}^{m}\frac{(a)_k}{k!}P_m(k)y^k,
	$$
	where $P_m$ is the interpolating polynomial whose explicit form can be read off from any of the expressions given in  \cite[Lemma~1]{Karp2025}, for instance,
	$$
	P_{m}(t)=\frac{(-1)^m}{m!}\sum\limits_{k=0}^{m}\binom{m}{k}(-t)_k(t-m)_{m-k}\cdot{}_{r+1}F_{r}\!\left(\begin{matrix}-k,\f+\m\\\f\end{matrix}\right).
	$$
This polynomial is symmetric with respect to the parameters $f_1,\ldots,f_r$ and has degree $m$. As in the case of $\hat{w}$, there exists a lower degree polynomial that interpolates the coefficients of $w$. It can be found by using the first Miller-Paris transformation
\begin{equation}\label{eq:MP1general}
F\!\left(\begin{matrix}\delta,\epsilon\\\rho\end{matrix}\,\bigg\vert\,H_{\omega}\,\bigg\vert x\right)=(1-x)^{-\delta}F\!\left(\begin{matrix}\delta,\rho-\epsilon-\omega\\\rho\end{matrix}\,\bigg\vert\,Q_{\omega}\:\bigg\vert\, \frac{x}{x-1}\right)
\end{equation}
valid when $(\rho-\epsilon-\omega)_{\omega}\ne0$, 
where the characteristic polynomial $Q_{\omega}$  is defined by the action of the first Miller-Paris operator on the polynomial $H_{\omega}$: $Q_{\omega}=T_{\omega}(\epsilon;\rho)H_{\omega}$. Its various explicit forms are given in  \cite[(8)-(13)]{Karp2025}.  We can again put  $\delta=a$, $\rho=f_1$, $\epsilon=f_1+m_1$, $\nuu=\f_{[1]}$, $\omm=\m_{[1]}$, $\omega=m_2+\cdots+m_r$, $\rho-\epsilon-\omega=-m$ (so that $(\rho-\epsilon-\omega)_{\omega}\ne0$) in \eqref{eq:MP1general}. On comparing with \eqref{eq:MP1polynomial}  this  yields
	\begin{equation}\label{eq:MP1poly_reduced}
		w\big(a;F_m\big\vert\:y\big)=\sum\limits_{k=0}^{m}\frac{(a)_k(-m)_{k}}{(f_1)_{k}k!}
		Q_{\omega}(k)y^k=F\!\left(\begin{matrix}-m,a\\f_1\end{matrix}\bigg\vert\,Q_{\omega}\,\bigg\vert\, y\right),
	\end{equation}
	where $\omega=m-m_1<m$ and
\begin{equation}\label{eq:Q_omega}
	Q_{\omega}(t)=\frac{1}{\big(\f_{[1]}\big)_{\m_{[1]}}}\big[T_{\omega}(f_1+m_1; f_1)\big(\f_{[1]}+\cdot\big)_{\m_{[1]}}\big](t)
\end{equation}
	is a polynomial of degree $\omega$. It can be also found from the interpolation conditions  
	$$
	Q_{\omega}(k)=\frac{(f_1)_{k}}{(-m)_{k}}{}_{r}F_{r-1}\!\left(\begin{matrix}-k,\f_{[1]}+\m_{[1]}\\\f_{[1]}\end{matrix}\right),~~k=0,\ldots,\omega,
	$$
using Lagrange's or Newton's form \cite[Lemma~1]{Karp2025} or any of the formulas \cite[(8)-(13)]{Karp2025}. For instance, according to  \cite[(9)]{Karp2025}, 
$$
Q_{\omega}(t)=\sum_{k=0}^{\omega}\frac{(f_1+m_1)_k(-1)^k(-t)_{k}(t-m)_{\omega-k}}{(-m)_{\omega}k!}{}_{r}F_{r-1}\!\left(\begin{matrix}-k,\f_{[1]}+\m_{[1]}\\\f_{[1]}\end{matrix}\right).    
$$
As a by-product this implies the summation formula 
$$
	{}_{r+1}F_{r}\!\left(\begin{matrix}-k,\f+\m\\\f\end{matrix}\right)=\frac{(-m)_{k}}{(-m)_{m-m_1}(f_1)_{k}}
	\sum_{j=0}^{k}\frac{(-k)_{j}(k-m)_{m-m_1-j}(f_1+m_1)_j}{(-1)^jj!}{}_{r}F_{r-1}\!\left(\begin{matrix}-j,\f_{[1]}+\m_{[1]}\\\f_{[1]}\end{matrix}\right)
$$
	valid for all non-negative integers $k$ (in particular, it follows that the left hand side vanishes for $k>m$).  This formula is a terminating case of \cite[Theorem~4]{MP2012}.
	
	The values of $P_m$ and $Q_{\omega}$ are clearly related by 
	$$
	P_m(k)=\frac{(f_1)_{k}}{(-m)_k}Q_{\omega}(k), ~~~k=0,\ldots,m.
	$$
The above observation can be reformulated as follows:  suppose the polynomial $F_m(t)$ possesses  a sequence of $m_1$ roots separated by unity, then the degree of the polynomials interpolating the coefficients of  $\hat{w}$  and $w$ can be reduced to $m-m_1$  by writing them  in the forms as in \eqref{eq:wh-formula} and \eqref{eq:MP1poly_reduced}, respectively. 
Comparing \eqref{eq:MP1polynomial} and \eqref{eq:wh-defined} we arrive at 
\begin{proposition}\label{pr:hatw-Bernstein}
The expansion of $\hat{w}$ in the Bernstein basis  $x^k(1-x)^{m-k}$, $k=0,\ldots,m$, is given by
\begin{equation}\label{eq:wh-Bernstein}
\hat{w}\big(a;F_m\big\vert\:x\big)=(1-x)^{m} w\Big(a;F_m\Big\vert\:\frac{x}{x-1}\Big)=\sum\limits_{k=0}^{m}(-1)^k\frac{(a)_k(-m)_{k}}{(f_1)_{k}k!}
Q_{\omega}(k)x^k(1-x)^{m-k},
\end{equation}
where $\omega=m-m_1$ and $Q_{\omega}$ is given in \eqref{eq:Q_omega}. In other words, the coefficients of $w(y)$ in the monomial basis are equal to the coefficients of $\hat{w}(x)$ in the Bernstein basis and vice versa.    
\end{proposition}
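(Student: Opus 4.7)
The plan is direct and essentially mechanical, since the heavy lifting has already been done when setting up the polynomial $w$ via the first Miller--Paris transformation. The first identity of the proposition is just a comparison of two expressions for the same hypergeometric function.

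Concretely, I would start from the defining relation \eqref{eq:wh-defined}, which gives
$$
\hat{w}\big(a;F_m\big\vert\:x\big)=(1-x)^{a+m}\,{}_{r+1}F_{r}\!\!\left(\!\begin{matrix}a, \f+\m\\\f\end{matrix}\bigg\vert\,x\right),
$$
and combine it with the definition \eqref{eq:MP1polynomial} of $w$, which may be rewritten as
$$
(1-x)^{a}\,{}_{r+1}F_{r}\!\!\left(\!\begin{matrix}a, \f+\m\\\f\end{matrix}\bigg\vert\,x\right)=w\Big(a;F_m\Big\vert\:\frac{x}{x-1}\Big).
$$
Dividing the first display by $(1-x)^{a}$ and substituting from the second immediately yields
$$
\hat{w}\big(a;F_m\big\vert\:x\big)=(1-x)^{m}\,w\Big(a;F_m\Big\vert\:\frac{x}{x-1}\Big),
$$
which is the first claim. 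Note that this identity is an equality of polynomials in $x$, so it holds for all $x$ (the apparent singularity at $x=1$ is cancelled since $w$ has degree $m$ in its argument).

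Next, to extract the Bernstein expansion, I would substitute the explicit monomial expansion of $w$ given in \eqref{eq:MP1poly_reduced},
$$
w\big(a;F_m\big\vert\:y\big)=\sum\limits_{k=0}^{m}\frac{(a)_k(-m)_{k}}{(f_1)_{k}k!}\,Q_{\omega}(k)\,y^k,
$$
into the identity above with $y=x/(x-1)$. Using $y^k=(-1)^k x^k(1-x)^{-k}$ and distributing the prefactor $(1-x)^m$ term by term gives
$$
\hat{w}\big(a;F_m\big\vert\:x\big)=\sum\limits_{k=0}^{m}(-1)^k\frac{(a)_k(-m)_{k}}{(f_1)_{k}k!}\,Q_{\omega}(k)\,x^k(1-x)^{m-k},
$$
which is formula \eqref{eq:wh-Bernstein}. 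The final sentence of the proposition, identifying the monomial coefficients of $w(y)$ with the Bernstein coefficients of $\hat{w}(x)$, is then a tautological reading of the two displays. There is essentially no obstacle here: the substance lies entirely in the availability of the polynomial $Q_{\omega}$ of the reduced degree $\omega=m-m_1$ via the first Miller--Paris transformation, which is already established above the proposition.
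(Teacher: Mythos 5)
Your proof is correct and follows exactly the route the paper intends: the paper derives the proposition simply by "comparing \eqref{eq:MP1polynomial} and \eqref{eq:wh-defined}", and your write-up supplies precisely that comparison together with the substitution $y=x/(x-1)$ into \eqref{eq:MP1poly_reduced}. Nothing is missing.
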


Another expression for the coefficients of $\hat{w}$ in the Bernstein basis can obtained  by an application of the second Miller-Paris transformation \eqref{eq:MP2general} to $w(y)$.  Indeed, by \eqref{eq:MP1poly_reduced} and \eqref{eq:MP2general} we have 
	$$
	w\big(a;F_m\big\vert\:y\big)=F\!\left(\begin{matrix}-m,a\\ f_1\end{matrix}\Big\vert\:Q_{\omega}\Big\vert\: y\right)
	=(1-y)^{f_1+m_1-a}F\!\left(\begin{matrix}f_1-a-\omega,f_1+m_1\\ f_1\end{matrix}\Big\vert\:\hat{R}_{\omega}\Big\vert\: y\right),
	$$
	where 
	$$
	\hat{R}_{\omega}(t)=\hat{T}_{\omega}(-m,a;f_1)Q_{\omega}=T_{\omega}(-m,a;f_1)T_{\omega}(f_1+m_1;f_1)\frac{\big(\f_{[1]}+\cdot\big)_{\m_{[1]}}}{\big(\f_{[1]}\big)_{\m_{[1]}}}.
	$$
	According to \cite[(15), (28)]{Karp2025} we have 
	$$
	\hat{T}_{\omega}(-m,a;f_1)=T_{\omega}(-m;f_1)T_{\omega}(a;f_1)=T_{\omega}(a;f_1)\big[T_{\omega}(f_1+m_1;f_1)\big]^{-1}.
	$$
	Hence,
	$$
	\hat{R}_{\omega}(t)=\frac{1}{\big(\f_{[1]}\big)_{\m_{[1]}}}\Big[T_{\omega}(a;f_1)\big(\f_{[1]}+\cdot\big)_{\m_{[1]}}\Big](t).
	$$
	This leads to the connection formula which is a guise of the first Miller-Paris transformation written for generalized $f$-Eulerian polynomials \eqref{eq:fa-Euler}:
	\begin{equation}\label{eq:whF-whR-connection}
		(1-y)^{m}\hat{w}\Big(a;F_m\Big\vert\:\frac{y}{y-1}\Big)=w\big(a;F_m\big\vert\:y\big)=\hat{w}\Big(f_1-a-\omega;\frac{(f_1+\cdot)_{m_1}}{(\f)_{\m}}T_{\omega}(a;f_1)\big(\f_{[1]}+\cdot\big)_{\m_{[1]}}\Big\vert\:y\Big).
	\end{equation}  
	If we write $x=y/(y-1)$ and use definition \eqref{eq:fa-Euler1} for the right hand side we get another form of the expansion of $\hat{w}(a;F_m|\,x)$ in the Bernstein basis.    Formula \eqref{eq:whF-whR-connection} also allows viewing $w$ as (generalized) $f$-Eulerian polynomial $\hat{w}$ generated by the parameter $f_1-a-\omega$ and the polynomial
	$(f_1+\cdot)_{m_1}\hat{R}_{\omega}(t)/(f_1)_{m_1}$.

	\section{\texorpdfstring{$d$}{d}-Narayana polynomials}

    An example of generalized $f$-Eulerian polynomial, we will discuss in a greater detail in this paper, is $d$-Narayana polynomials defined by Robert Sulanke \cite{Sulanke2004} as follows. Let $\mathcal{C}(d,m)$ denote the set of lattice paths in $d$-dimensional Euclidean space with steps  
	$$
	X_1:=(1, 0,\ldots, 0),~X_2:=(0, 1,\ldots, 0),\ldots,~X_d:=(0, 0, . . . , 1),
	$$
	running from $(0,0,\ldots, 0)$ to $(m,m,\ldots, m)$ and lying in the chamber  
	$$
	\{(x_1,x_2,\ldots,x_d) : 0\le x_1 \le x_2 \le\cdots\le x_d\}
	$$ 
	(known as ballot paths for $d$ candidates).
	On a path $P:=p_1p_2 \cdots p_{dm}$, we call any step pair $p_{i}p_{i+1}$ an ascent (respectively, a descent) if $p_{i}p_{i+1}= X_jX_l$ with $j<l$ (respectively $j>l$).  For each path $P\in\mathcal{C}(d,m)$ put
	\begin{align*}
		&\mathrm{asc}(P)=|\{i: p_{i}p_{i+1}=X_jX_l~\text{with}~j<l\}|~\text{- number of ascents on}~P
		\\	
		&\mathrm{des}(P)=|\{i: p_{i}p_{i+1}=X_jX_l~\text{with}~j>l\}|~\text{- number of descents on}~P
	\end{align*}
	For any dimension $d\ge2$ and for $0\le k \le (d-1)(m-1)$, define the $d$-Narayana  number as
	$$
	N(d,m,k)=|\{P\in\mathcal{C}(d,m): \mathrm{asc}(P)=k\}|.
	$$
	Finally,
	\begin{equation}\label{eq:dNarayana}
		N_{d,m}(x)=\sum\limits_{k=0}^{(d-1)(m-1)}N(d,m,k)x^k    
	\end{equation}
	is called $d$-dimensional Narayana polynomial or $d$-Narayana polynomial.  Another combinatorial meaning of the numbers $N(d,m,k)$ is the number of rectangular standard Young tableaux with $d$ rows and $m$ columns having $k$ occurrences of an integer $i$ appearing in a lower row
	than that of $i + 1$ \cite[Remark~1.3]{Sulanke2004}.  Sulanke also established the connection with order polynomials $\Omega_{P,\omega}(m)$ mentioned above \cite[section~2]{Sulanke2004}. The relation for $N(d,m,k)$ found by Sulanke \cite[Proposition~1]{Sulanke2004} and important for us is the following:
	$$
	N(d,m,k)=\sum\limits_{j=0}^{k}(-1)^{k-j}\binom{dm+1}{k-j}\prod\limits_{i=0}^{d-1}\binom{m+i+j}{m}\binom{m+i}{m}^{-1}.  
	$$
	It has been observed by Johann Cigler \cite{Cigler} that by  elementary manipulations this formula can be written as
	\begin{equation}\label{eq:SulankeNdmk}
		N(d,m,k)=\sum\limits_{j=0}^{k}\frac{(-dm-1)_{k-j}}{(k-j)!}\left\langle\!\!\! \begin{array}{c}m+j\\m\end{array}\!\!\!\right\rangle_{\!\!d},     
	\end{equation}
	where 
	$$
	\left\langle\!\!\! \begin{array}{c}m+j\\m\end{array}\!\!\!\right\rangle_{\!\!d}=\prod_{k=1}^{d}\frac{(m+k)_{j}}{(k)_{j}}
	$$
	is called $d$-Hoggatt's binomial counting the number of semi-standard Young tableaux with shape $d^m$ (a box with $d$ columns and $m$ rows) with entries from $\{1,2,\ldots,m+j\}$. Cigler \cite[Theorem~1]{Cigler} established that Hankel determinants of binomial coefficients can be expressed in terms of Hoggatt's binomials as follows: 
	$$
	\left\langle\!\!\! \begin{array}{c}k\\m\end{array}\!\!\!\right\rangle_{\!\!d}=(-1)^{d(d-1)/2}\det\Big[\binom{k+i+j}{m+d-1}\Big]_{i,j=0}^{d-1}.
	$$
	Sulanke's formula \eqref{eq:SulankeNdmk}
	immediately implies that $d$-Narayana polynomials $N_{d,m}$ can be generated by \eqref{eq:CiglerF}.	Another interpretation of the polynomial $N_{d,m}(x)$ and of the hypergeometric series on the left hand side of \eqref{eq:CiglerF} was presented more recently by Luke Braun \cite{Braun2019}. Namely, he showed in \cite[Theorem~1]{Braun2019} that the $d$-Narayana polynomial $N_{d,m-d}(x)$ is the so-called $h$-polynomial of the Grassmanian $\mathrm{Gr}(d,m)$, while the corresponding  hypergeometric function of the left hand side is its Hilbert series.

In a letter to the first author	dated February 28, 2022, Johann Cigler asked whether certain properties of $d$-Narayana polynomials can be established 
only by using the hypergeometric generating function \eqref{eq:CiglerF}.   Namely, the sequence $(N_{d,m}(1))_{m\ge0}$ known as the multidimensional Catalan
	numbers, \cite[A060854]{Sloane}, is  given by 
	\begin{equation}\label{eq:d-Catalan}
		N_{d,m}(1)=(md)!\prod_{j=0}^{d-1}\frac{j!}{(m+j)!},    
	\end{equation}
	see \cite[art. 93-103]{MacMahon}, \cite[p.2]{Sulanke2004}.  Next, following Sulanke \cite[Corollary~1]{Sulanke2004}, the polynomial $N_{d,m}(x)$ is palindromic (or self-reciprocal) and has degree $(d-1)(m-1)$, $N_{d,m}(x)=x^{(d-1)(m-1)}N_{d,m}(1/x)$. Finally, as we mentioned in the introduction, Agapito in \cite[Theorem~3.13]{Agapito} and later Chen, Yang and Zhang in \cite[Theorem 3.1]{CYZ} proved  that all zeros of  $N_{d,m}(x)$  are real and negative, resolving and strengthening a previous conjecture by Kirillov claiming that the sequence of $d$-Narayana numbers is log-concave.  In this section, we will prove all the above facts only by using the generating function   \eqref{eq:CiglerF}. 

Comparing definition \eqref{eq:Narayana_hyper} with \eqref{eq:f-Euler} we see that the corresponding polynomial 
$$
f(n)=\frac{(m+1)_n\cdots(m+d)_{n}}{(1)_{n}\cdots(d)_{n}}
$$
has degree $md$, so that the general theory suggests that the degree of $N_{d,m}$ should be $md$. However, choosing $a=m+d$ in the definition  \eqref{eq:fa-Euler1} and using the relation $(f+n)_{m}/(f)_{m}=(f+m)_{n}/(f)_{n}$ the generating function \eqref{eq:CiglerF} implies that 
\begin{subequations}\label{eq:dNarayanaFEuler}
\begin{align}
	N_{d,m}(x)=&\hat{w}(m+d; F_{K}\vert\:x), \label{eq:Nw} 
\end{align}
where the polynomial $F_{K}$ has degree $K=(d-1)(m-1)$ and   is given by
\begin{align}
  F_{K}(t)=&\frac{(2+t)_{m-1}(3+t)_{m-1}\cdots(d+t)_{m-1}}{(2)_{m-1}(3)_{m-1}\cdots(d)_{m-1}}.  \label{eq:FM}
\end{align}
\end{subequations}
This implies that the polynomial $N_{d,m}(x)$ has degree at most  $K$.

\subsection{Expansions in monomial and Bernstein bases}
 In this subsection we will derive an explicit formula for $N_{d,m}(x)$ by applying the second Miller-Paris transformation \eqref{eq:MP2general} to the function on the left hand side of \eqref{eq:CiglerF} with identifications $\delta=m+d$, $\epsilon=m+1$, $\rho=2$, $\omega=L:=(d-2)(m-1)$ and 
\begin{equation}\label{eq:HM}
H_{L}(x)=\frac{(3+x)_{m-1}\cdots(d+x)_{m-1}}{(3)_{m-1}\cdots(d)_{m-1}},
\end{equation}
where $L$ is the degree of $H_{L}$. This yields:
\begin{equation}\label{eq:Narayana_hyper}
{}_dF_{d-1}\!\left(\!\!\begin{array}{c}m+1,m+2,\ldots,m+d\\2,3,\ldots,d\end{array}\bigg\vert\,x\right)\!=\!
	(1-x)^{-md-1}F\!\left(\!\begin{matrix}-K, -M\\2 \end{matrix}\,\bigg\vert\,\hat{Q}_{L}\,\bigg\vert\, x\right)\!=\!\frac{N_{d,m}(x)}{(1-x)^{md+1}},    
\end{equation}
where $M=(d-1)m$ and the characteristic polynomial $\hat{Q}_{L}(t)=[\hat{T}_{L}(m+1,m+d;2)H_{L}(\cdot)](t)$ with $\hat{T}_{L}(m+1,m+d;2)$ denoting the second Miller-Paris operator, see \cite[Section~2.2]{Karp2025} and text below \eqref{eq:MP2general}. 
This shows that the degree of $N_{d,m}$ is indeed equal to $K=(d-1)(m-1)$.   Using the form \cite[Theorem~2]{KPChapter2020} or \cite[(24)]{Karp2025} for the  characteristic polynomial, we will have 
\begin{equation}\label{eq:hatQN}
\hat{Q}_{L}(t)=\!\!\!\sum\limits_{j=0}^{L}\frac{(m+d)_j(-M-3)_{j}(-t)_{j}(t-M)_{L-j}}{(-1)^j(-M)_{L}(-K)_{j}j!}
	{_dF_{d-1}}\!\left(\begin{array}{c}-j,m+1,\ldots,m+d-1\\K+3-j,3,\ldots,d\end{array}\right).    
\end{equation}
Applying 
	$$
	\frac{(-K)_k(-M)_{j}}{(2)_{j}j!}=\frac{1}{K+1}\binom{M}{j}\binom{K+1}{j+1},
	$$
	we obtain 
\begin{theorem}\label{th:dNar-explicit}
The following formula holds for the polynomial $N_{d,m}(x)$:
\begin{equation}\label{eq:Narayana_explicit}
		N_{d,m}(x)=\frac{1}{K+1}\sum\limits_{j=0}^{K}\binom{M}{j}\binom{K+1}{j+1}\hat{Q}_{L}(j)x^j,
	\end{equation}
where $M=(d-1)m$, $K=(d-1)(m-1)$ and  $\hat{Q}_{L}(t)=[\hat{T}_{L}(m+1,m+d;2)H_{L}(\cdot)](t)$ with $H_{L}$ defined in \eqref{eq:HM} and the second Miller-Paris operator $\hat{T}_{L}(m+1,m+d;2)$ can be computed by any of the expressions \cite[(19), (20), (24), (25)]{Karp2025}. In particular, $\hat{Q}_{L}$ can be found by  \eqref{eq:hatQN}.
\end{theorem}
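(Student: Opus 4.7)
The plan is to apply the second Miller--Paris transformation \eqref{eq:MP2general} to the hypergeometric series on the left-hand side of \eqref{eq:CiglerF} and then extract coefficients explicitly in the monomial basis. The theorem is essentially a packaging of this computation, which is largely visible in the text preceding it, so the task is to lay out the parameter identification, verify the hypotheses, and carry out the straightforward coefficient manipulation.

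First, I would identify the Miller--Paris parameters by pulling $\delta=m+d$ from the top row of the ${}_dF_{d-1}$, pulling $\epsilon=m+1$ from the top row and $\rho=2$ from the bottom row, and collecting the remainder into $\nuu=(3,4,\ldots,d)$ and $\omm=(m-1,m-1,\ldots,m-1)$, so that $\omega=L=(d-2)(m-1)$ and the characteristic polynomial of the bulk is exactly $H_L$ from \eqref{eq:HM}. I would then check the non-degeneracy conditions $(\rho-\delta-\omega)_\omega\ne0$, $(\rho-\epsilon-\omega)_\omega\ne0$, $(1+\delta+\epsilon-\rho)_\omega\ne0$, which for these values reduce to Pochhammer symbols involving $-K$, $-M$ and $2m+d$ and are easily inspected.

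Applying \eqref{eq:MP2general} yields a prefactor $(1-x)^{\rho-\delta-\epsilon-\omega}=(1-x)^{-md-1}$ that matches the denominator in \eqref{eq:CiglerF}; this proves the middle identity of \eqref{eq:Narayana_hyper} and, as a by-product, shows $\deg N_{d,m}=K=(d-1)(m-1)$, since the transformed series has upper parameters $\rho-\delta-\omega=-K$ and $\rho-\epsilon-\omega=-M$. Expanding that series as a power series gives
\[
F\!\left(\!\begin{matrix}-K,\,-M\\2\end{matrix}\,\bigg\vert\,\hat{Q}_L\,\bigg\vert\,x\right)=\sum_{j=0}^{K}\frac{(-K)_j(-M)_j}{(2)_j\,j!}\,\hat{Q}_L(j)\,x^j,
\]
and the elementary identity
\[
\frac{(-K)_j(-M)_j}{(2)_j\,j!}=\frac{1}{K+1}\binom{M}{j}\binom{K+1}{j+1},
\]
obtained by rewriting each Pochhammer symbol as a ratio of factorials, yields \eqref{eq:Narayana_explicit}. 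Finally, the explicit expression \eqref{eq:hatQN} for $\hat{Q}_L$ follows by substituting the above parameter choices into the characteristic polynomial formula \cite[(24)]{Karp2025} (equivalently \cite[Theorem~2]{KPChapter2020}).

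The only real obstacle is bookkeeping: one must carefully distinguish the two degrees $L=(d-2)(m-1)$ and $K=(d-1)(m-1)$, verify that the transformation's exponent collapses exactly to $-md-1$, and ensure the identification of $\nuu$ and $\omm$ is compatible with \eqref{eq:HM}. Once these matchings are in place, no technical difficulty arises, and the sharp degree bound $K$ on $N_{d,m}$ falls out essentially for free from the Miller--Paris reduction.
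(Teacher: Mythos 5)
Your proposal is correct and takes essentially the same route as the paper: apply the second Miller--Paris transformation \eqref{eq:MP2general} with $\delta=m+d$, $\epsilon=m+1$, $\rho=2$, $\omm=(m-1,\ldots,m-1)$, $\omega=L$, read off the prefactor $(1-x)^{-md-1}$ and the terminating parameters, and convert $\frac{(-K)_j(-M)_j}{(2)_j\,j!}$ into $\frac{1}{K+1}\binom{M}{j}\binom{K+1}{j+1}$. The only quibble is a label swap --- with these identifications $\rho-\delta-\omega=-M$ and $\rho-\epsilon-\omega=-K$, not the other way around --- which is harmless since the two upper parameters enter the transformed series symmetrically.
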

For $d=2$, we have $L=0$, $M=m$, $K=m-1$, so that $\hat{Q}_{L}\equiv1$ and we recover the standard expression for the Narayana polynomials (note that our definition follows \cite{Sulanke2004}, while in many other works $xN_{d,m}(x)$ corresponds to our $N_{d,m}(x)$).  The coefficient at $x^j$ is the $d$-Narayana number $N(d,m,j)$, see  \cite[(2),(3)]{Sulanke2004},  \cite[(12)]{Cigler}, so that the above furnishes an alternative  explicit expression for these numbers. 
We can also rewrite \eqref{eq:Narayana_hyper} in terms of the standard hypergeometric notation:
$$
N_{d,m}(x)={}_{L+2}F_{L+1}\left.\!\!\left(\!\begin{matrix}-K,-M, \hat{\etta}+1\\2, \hat{\etta}\end{matrix}\right\vert x\right),
$$
where  $\hat{\etta}=(\hat{\eta}_1,\ldots,\hat{\eta}_{L})$ are the roots of the polynomial $\hat{Q}_{L}(-t)$.  The above hypergeometric expression for $d$-Narayana polynomial was previously discovered by Luke Braun in \cite[Theorem~3]{Braun2019}. He also used the second Miller-Paris' transformation \cite[Theorem~4]{MP2013}  and Miller-Paris form \cite[(20)]{Karp2025} for the characteristic polynomial $\hat{Q}_{L}$.  The form \eqref{eq:Narayana_explicit} is not contained explicitly in his work. Note that that we can obtain various alternative expressions for $N_{d,m}(x)$ by making different choices for $\delta,\epsilon$ and $\rho$ when applying \eqref{eq:MP2general}.  For instance, we can choose $\delta=m+1$, $\epsilon=m+2$, $\rho=d$ to get 
\begin{equation*}
N_{d,m}(x)=F\!\left(\!\begin{matrix}-M+1, -M \\d \end{matrix}\,\bigg\vert\,\hat{P}_{\widebar{M}}\,\bigg\vert\, x\right),    
\end{equation*}
where  $\hat{P}_{\widebar{M}}(t)=[\hat{T}_{\widebar{M}}(m+1,m+2;d)G_{\widebar{M}}(\cdot)](t)$, $\widebar{M}=(d-2)(m+1)$ and 
$$
G_{\widebar{M}}(x)=\frac{(2+x)_{m+1}\cdots(d-1+x)_{m+1}}{(2)_{m+1}\cdots(d-1)_{m+1}}.
$$

We can apply \eqref{eq:wh-Bernstein} or, equivalently, \eqref{eq:MP1general}, to find the coefficients of $N_{d,m}(x)$ in the Bernstein basis $x^{j}(1-x)^{K-j}$. Indeed, \eqref{eq:wh-Bernstein} states that 
\begin{equation}\label{eq:NBernstein1}
N_{d,m}(x)=(1-x)^{K}w\bigg(m+d;F_K\bigg\vert\:\frac{x}{x-1}\bigg)=\sum\limits_{j=0}^{K}\frac{(-K)_j(m+d)_j}{(-1)^{j}(2)_jj!}Q_{L}(j)x^j(1-x)^{K-j}.    
\end{equation}
The characteristic polynomial  $Q_{L}(t)=[T_L(m+1;2)H_{L}](t)$, where $H_{L}$ is defined  in \eqref{eq:HM}, has degree $L=(d-2)(m-1)$.
    
Expansion \eqref{eq:NBernstein1} is not unique in the following sense. Writing the connection formula \eqref{eq:whF-whR-connection}, in view of \eqref{eq:dNarayanaFEuler}, yields:
\begin{equation}\label{eq:connectionforNarayana}
(1-y)^{K}N_{d,m}\bigg(\frac{y}{y-1}\bigg)
=\hat{w}\bigg(\!\!-(d-1)m;\frac{(2+\cdot)_{m-1}}{(2)_{m-1}}T_{L}(m+d;2)H_{L}\bigg\vert\,y\bigg),
\end{equation}
which, after using definition \eqref{eq:fa-Euler} and $x=y/(y-1)$, amounts to
$$
N_{d,m}(x)=
\sum_{j=0}^{M}\frac{(-M)_{j}(m+1)_{j}}{(-1)^{j}(2)_{j}j!}\widetilde{Q}_{L}(j)x^{j}(1-x)^{M-j},
$$
where $\widetilde{Q}_{L}(t)=[T_{L}(m+d;2)H_{L}](t)$.
Choosing different parameters as $\delta$, $\epsilon$, $\rho$ when applying the Miller-Paris transformations we can get many different representations of this type. For example, if  $R_{L-1}(t)=\big[T_{(d-2)m}(m+1;2)P_{(d-2)m)}\big](t)$ with
\begin{equation}\label{eq:Pd-2m}
P_{(d-2)m}(x)=\frac{(3+x)_{m}\cdots(d+x)_{m}}{(3)_{m}\cdots(d)_m}    
\end{equation}
of degree $(d-2)m$, then 
\begin{equation}\label{eq:NBern1}
N_{d,m}(x)=(1-x)^{M-1}{}_{L+1}F_{L}\!\left(\!\!\begin{array}{c}1-M,m+2,\zetta+1\\2,\zetta\end{array}\vline\:\frac{x}{x-1}\right),
\end{equation}
where $\zetta=(\zeta_1,\zeta_2,\ldots,\zeta_{L-1})$ are the roots of $R_{L-1}(-t)$.  Notation $R_{L-1}$ is justified by the fact that the true degree of $R_{L-1}$ is $L-1$. This can be  seen as follows. We can rewrite  formula \cite[(10)]{Karp2025}  as 
	\begin{equation*}
		\frac{T_{\omega}(\epsilon;\rho)(\nuu+\cdot)_{\omm}}{(\nuu)_{\omm}}=\frac{1}{(\nuu)_{\omm}(\rho-\epsilon-\omega)_{\omega}}\sum\limits_{k=0}^{\omega}\frac{(\epsilon)_k(1-\rho+\epsilon)_{k}(\rho-\epsilon-\omega+t)_{\omega-k}}{(-1)^kk!}\sum\limits_{j=0}^{k}\frac{(-k)_{j}}{j!}(\nuu-\epsilon-j)_{\omm}.
	\end{equation*}
 If we apply it to the definition of $R_{L-1}$ with $\rho=2$, $\epsilon=m+1$,  $\nuu=(3,\ldots,d)$, $\omm=(m,\ldots,m)$, $\omega=(d-2)m$, we will have 
	\begin{multline*}
		R_{L-1}(t)=\frac{1}{(3)_{m}\cdots(d)_{m}(1-(d-1)m)_{(d-2)m}}\sum\limits_{k=0}^{(d-2)m}\frac{(m)_{k}(m+1)_k}{(-1)^kk!}(1-(d-1)m+t)_{(d-2)m-k}
		\\
		\times\sum\limits_{j=0}^{k}\frac{(-k)_{j}}{j!}(2-m-j)_{m}\cdots(d-1-m-j)_{m},
	\end{multline*}
As, clearly,
	$$
	(2-m-j)_{m}(2-m-j)_{m}\cdots(d-1-m-j)_{m}=0~\text{for}~j=0,1,\ldots,d-2,
	$$
	the inner sum vanishes for $k=0,1,\ldots,d-2$, and the outer sum starts with $k=d-1$ proving that the degree of $R_{L-1}$ is indeed $(d-2)m-(d-1)=L-1$.

\subsection{Zeros of \texorpdfstring{$N_{d,m}(x)$}{}}
We intend to apply Theorem~\ref{th:negative_zeros} to the polynomial $N_{d,m}(x)=\hat{w}(m+d;F_K\vert\,x)$, see \eqref{eq:dNarayanaFEuler}.  The negated zeros of the polynomial $F_K$ defined in \eqref{eq:FM} are 
$$
\{\alpha_1,\ldots,\alpha_K\}=\{2,\ldots,m,3,\ldots,m+1,\ldots,d,\ldots,m+d-2\}.
$$
As $a=m+d$, we have $a>\alpha_{j}$, $j=1,\ldots,K$.  Hence, by Corollary~\ref{cr:large-a} all zeros of $N_{d,m}$ are real and negative.

\subsection{Multidimensional Catalan numbers \texorpdfstring{$N_{d,m}(1)$}{}}
	The value $N_{d,m}(1)$  can be found immediately from the asymptotic formula \cite[(3.1)]{CKP2021}
	$$
	{}_{d}F_{d-1}\left.\!\left(\begin{matrix}\a\\ \b\end{matrix}\:\right\vert x\right)
	=\frac{\Gamma(\b)}{\Gamma(\a)}\Gamma\Big(\sum a_i-\sum b_j\Big)(1-x)^{\sum b_j-\sum a_i}\left(1+o(1)\right)~\text{as}~x\to1
	$$
	valid if $\sum b_j<\sum a_i$.  Application of this formula yields as $x\to1$:
	$$
	{_dF_{d-1}}\!\left(\begin{array}{c}m+1,m+2,\ldots,m+d\\2,3,\ldots,d\end{array}\vline\:x\right)=(1-x)^{-md-1}\frac{1!2!\cdots (d-1)!(md)!}{m!(m+1)!\cdots(m+d-1)!}\left(1+o(1)\right),
	$$
	so that  multidimensional Catalan numbers are given by
	$$
	N_{d,m}(1)=\frac{1!2!\cdots (d-1)!(md)!}{m!(m+1)!\cdots(m+d-1)!}
	$$
    in accord with \eqref{eq:d-Catalan}.
	
	\bigskip
	\bigskip
	
	\subsection{Palindromic  property (or self-reciprocity)}
	To prove the palindromic property it is convenient to use the expression for the polynomial $N_{d,m}(x)$   obtained directly from \eqref{eq:CiglerF} in view of \eqref{eq:wh-defined}: 
	\begin{multline}\label{eq:Amd2}
		N_{d,m}(x)=(1-x)^{md+1}{_dF_{d-1}}\!\left(\begin{array}{c}m+1,m+2,\ldots,m+d\\2,3,\ldots,d\end{array}\vline\:x\right)
		\\
		=\sum\limits_{n=0}^{(d-1)(m-1)}\!\!\!x^n\frac{(-md-1)_n}{n!}{_{d+1}F_{d}}\!\left(\!\!\begin{array}{c}-n,m+1,m+2,\ldots,m+d\\md+2-n,2,3,\ldots,d\!\!\end{array}\right).
	\end{multline} 
	The upper summation limit is $(d-1)(m-1)$ due to the  degree of $N_{d,m}(x)$. Note that 
\eqref{eq:Amd2} is easily seen to be equivalent to Sulanke's formula \cite[Proposition~1]{Sulanke2004}. 
	Using \eqref{eq:Amd2} the required palindromic property $[x^n]N_{d,m}(x)=[x^{(d-1)(m-1)-n}]N_{d,m}(x)$ takes the form
	\begin{multline}\label{eq:panindrom}
		\frac{(-md-1)_n}{n!}{_{d+1}F_{d}}\!\left(\!\!\begin{array}{c}-n,m+1,m+2,\ldots,m+d\\md+2-n,2,3,\ldots,d\!\!\end{array}\right)
		\\
		=\frac{(-md-1)_{(d-1)(m-1)-n}}{((d-1)(m-1)-n)!}{_{d+1}F_{d}}\!\left(\!\!\begin{array}{c}n-(d-1)(m-1),m+1,m+2,\ldots,m+d\\1+m+d+n,2,3,\ldots,d\!\!\end{array}\right).
	\end{multline}
	This identity is an instance of Gasper's identity \cite[Theorem~2.3]{KPITSF2018}
	\begin{equation}\label{eq:Kar4}
		\frac{1}{\Gamma(1-a)}{}_{r+2}F_{r+1}\left.\!\!\left(\begin{matrix}a,b,\f+\m\\c,\f\end{matrix}\right.\right)
		=
		\frac{\Gamma(c)(\f-b)_\m}{\Gamma(1+b-a)\Gamma(c-b)(\f)_\m}
		{}_{r+2}F_{r+1}\left.\!\!\left(\begin{matrix}b,1-c+b,1-\f+b\\1+b-a,1-\f-\m+b\end{matrix}\right.\right)
	\end{equation}
	valid for $\Re(c-a-b-\sum m_i)>0$, $-c\notin\N\cup\{0\}$ and $-f_i\notin\N\cup\{0\}$ with identification:
	$$
	a=-n,~~~b=m+d,~~~c=md+2-n,~~~f=(2,\ldots,d),~~~\m=(m-1,\ldots,m-1).
	$$

\medskip 	
\textbf{Remark.} The claim that $N_{d,m}(x)$ is palindromic established above is equivalent to the equality $N_{d,m}(x)=x^{K}N_{d,m}(1/x)$. On comparing coefficients in \eqref{eq:Narayana_explicit} 
this amounts to
$$
\binom{M}{j}\binom{K+1}{j+1}\hat{Q}_{L}(j)=
\binom{M}{K-j}\binom{K+1}{K-j+1}\hat{Q}_{L}(K-j).
$$
Simplifying binomials this is equivalent to 
$$
(j+2)_{d-2}\hat{Q}_{L}(j)=
(K-j+2)_{d-2}\hat{Q}_{L}(K-j)
$$
for $j=0,\ldots,K$. Both sides of the above identity are polynomials of degree $(d-2)m$ in the variable $j$.  For $m>d-2$ the number of values of  $j$, where they agree,  exceeds their degrees, so that both polynomials must coincide identically. This amounts to various curious  hypergeometric identities of the form
$$
(t+2)_{d-2}\hat{Q}_{L}(t)=
(K-t+2)_{d-2}\hat{Q}_{L}(K-t)
$$
where $\hat{Q}_{L}(t)=[\hat{T}_{L}(m+1,m+d;2)H_{L}(\cdot)](t)$ and $H_{L}$ defined in \eqref{eq:HM}.  We can use various explicit forms for  the second Miller-Paris operator $\hat{T}_{L}(m+1,m+d;2)$ by choosing any of the expressions \cite[(19), (20), (24), (25)]{Karp2025}, while the choices on the right and left hand sides may differ. One particular choice is given in  \eqref{eq:hatQN}.

	\section{Jacobi-Pi\~neiro polynomials}
	
	\subsection{Zeros of Jacobi-Pi\~neiro polynomials}
	
	In view of definition \eqref{eq:JacobiPineiro} we can identify the Jacobi-Pi\~neiro polynomial with the generalized $f$-Eulerian polynomial $\hat{w}$ as defined in \eqref{eq:fa-Euler} or \eqref{eq:fa-Euler1} (recall that $(\boldsymbol{\alpha}+1)_{\n}=(\alpha_1+1)_{n_1}\cdots(\alpha_r+1)_{n_r}$):
	$$
	P_{\n}^{(\boldsymbol{\alpha},\beta)}(x)=\frac{(-1)^n(\boldsymbol{\alpha}+1)_{\n}}{(n+\boldsymbol{\alpha}+\beta+1)_{\n}}\hat{w}(-n-\beta;F_n\vert\:x),
	$$
	where $\n=(n_1,\ldots,n_r)$, $n=n_1+\cdots+n_r$, and
$$
F_n(t)=\frac{(\boldsymbol{\alpha}+1+t)_{\n}}{(\boldsymbol{\alpha}+1)_{\n}}.
$$
We will apply Theorem~\ref{th:zeros-in-0-1} to determine the zero location of $P_{\n}^{(\boldsymbol{\alpha},\beta)}$. Indeed, once we assume that $\alpha_j>-1$ for $j=1,\ldots,r$, we immediately conclude that all zeros of $F_n(t)$ are negative. Hence, according to Theorem~\ref{th:zeros-in-0-1} if $a+n=-n-\beta+n=-\beta<1$, i.e. if $\beta>-1$, then all zeros of $P_{\n}^{(\boldsymbol{\alpha},\beta)}$ lie in $(0,1)$. If  $\alpha_j>-1$ for $j=1,\ldots,r$, while $-2<\beta\le-1$, all zeros but one lie in $(0,1)$ and the remaining zero lies in $[1,\infty)$. Moreover, by an application of Lemma~\ref{lm:Talpha-negative_s} it is easy to  see that the zeros $P_{\n}^{(\boldsymbol{\alpha},\beta)}(x)$ and $P_{\m}^{(\boldsymbol{\eta},\beta)}(x)$ interlace if $n-m=1$ (here $m=m_1+\cdots+m_k$) and 
	$$
	(\eta_1,\eta_1+1,\ldots,\eta_1+m_1-1,\ldots,\eta_{k},\ldots,\eta_{k}+m_k-1)\!\subset\!(\alpha_1,\alpha_1+1,\ldots,\alpha_1+n_1-1,\ldots,\alpha_{r},\ldots,\alpha_{r}+n_r-1).
	$$

\subsection{Explicit expressions}
    
	Explicit expansion in monomial basis for $P_{\n}^{(\boldsymbol{\alpha},\beta)}(x)$ can be constructed using \eqref{eq:wh-formula} from Proposition~\ref{pr:fEuler-monomial}:
	\begin{equation*}
		P_{\n}^{(\boldsymbol{\alpha},\beta)}(x)=\frac{(-1)^n(\alpha_1+1)_{n_1}}{(n+\boldsymbol{\alpha}+\beta+1)_{\n}}\sum\limits_{k=0}^{n}x^k
		\frac{(-n)_{k}(\alpha_1+n_1+\beta+1)_{k}}{(\alpha_1+1)_{k}k!}\hat{Q}_{\tilde{n}}(k),
	\end{equation*}
	where $\tilde{n}=n-n_1$ and 
	$$
	\hat{Q}_{\tilde{n}}(t)=\hat{T}_{\tilde{n}}(-n-\beta,\alpha_1+n_1+1;\alpha_1+1)(\boldsymbol{\alpha}_{[1]}+t+1)_{\n_{[1]}}.
	$$
	Any of the expressions 
    \cite[(19), (20), (24), (25)]{Karp2025} can be used for computing the second Miller-Paris operator $\hat{T}_{\tilde{n}}$.
Of course, the above representation is not unique in the sense that we can exchange the roles of $\alpha_1$ and any other $\alpha_l$, $l=2,\ldots,r$.  
    
Furthermore, formula \eqref{eq:wh-Bernstein} gives an expansion of $P_{\n}^{(\boldsymbol{\alpha},\beta)}(x)$  in the Bernstein basis:
	$$
	P_{\n}^{(\boldsymbol{\alpha},\beta)}(x)=\frac{(-1)^n(\alpha_1+1)_{n_1}}{(n+\boldsymbol{\alpha}+\beta+1)_{\n}}\sum\limits_{k=0}^{n}(-1)^k\frac{(-n)_{k}(-n-\beta)_k}{(\alpha_1+1)_{k}k!}
	Q_{\tilde{n}}(k)x^k(1-x)^{n-k}, 	
	$$
	where 
	$$
	Q_{\tilde{n}}(t)=T_{\tilde{n}}(\alpha_1+n_1+1;\alpha_1+1)(\boldsymbol{\alpha}_{[1]}+t+1)_{\n_{[1]}}.
	$$
	Any of the expressions \cite[(8)--(13)]{Karp2025} can be used for computing the first Miller-Paris operator $T_{\tilde{n}}$.
	
	Next, we explore the consequences of the relation \eqref{eq:whF-whR-connection}. Its application to $\hat{w}(-n-\beta;F_{n}|x)$ (where $x=y/(y-1)$) in view of \eqref{eq:fa-Euler1} and \eqref{eq:Fm-defined} yields 
	$$
	P_{\n}^{(\boldsymbol{\alpha},\beta)}\Big(\frac{y}{y-1}\Big)=\frac{(-1)^n(\alpha_1+1)_{n_1}(1-y)^{\alpha_1+n_1+\beta+1}}{(n+\boldsymbol{\alpha}+\beta+1)_{\n}}F\left(\!\!\!\begin{array}{c}
		\alpha_1+n_1+\beta+1, \alpha_1+n_1+1 \\
		\alpha_1+1
	\end{array} \bigg\vert\hat{R}_{\tilde{n}}\bigg\vert\: y\right),
	$$
	where $\tilde{n}=n-n_1$ and 
	$$
	\hat{R}_{\tilde{n}}(t)=T_{\tilde{n}}(-n-\beta;\alpha_1+1)(\boldsymbol{\alpha}_{[1]}+1+\cdot)_{\n_{[1]}}.
	$$
	As $y\in(-\infty,0)$ is equivalent to $y/(y-1)\in(0,1)$, we see that if all (or some) zeros of  the hypergeometric function of the right hand side lie in $(-\infty,0)$, then all (or some) zeros of  $P_{\n}^{(\boldsymbol{\alpha},\beta)}(x)$ lie in $(0,1)$. Writing $\boldsymbol{\eta}$ for the vector of zeros of $\hat{R}_{\tilde{n}}(-t)$ we can rewrite the above formula as
\begin{multline*}
P_{\n}^{(\boldsymbol{\alpha},\beta)}\Big(\frac{y}{y-1}\Big)=\frac{(-1)^n(\alpha_1+1)_{n_1}(1-y)^{\alpha_1+n_1+\beta+1}}{(n+\boldsymbol{\alpha}+\beta+1)_{\n}}
\\
\times{}_{\tilde{n}+2}F_{\tilde{n}+1}\left(\!\!\!\begin{array}{c}
		\alpha_1+n_1+1+\beta, \alpha_1+n_1+1,\eta_1+1,\ldots,\eta_{\tilde{n}}+1 \\
		\alpha_1+1,\eta_1,\ldots,\eta_{\tilde{n}}
	\end{array} \bigg\vert\: y\right).
\end{multline*}
Whenever Theorem~\ref{th:negative_zeros} is applicable to this function, we can then conclude that the zeros of  $P_{\n}^{(\boldsymbol{\alpha},\beta)}(x)$ lie in $(0,1)$.

One possible form of $\hat{R}_{\tilde{n}}$ according to  \cite[(9)]{Karp2025} is
	$$
	\hat{R}_{\tilde{n}}(t)\!=\!\frac{(\boldsymbol{\alpha}_{[1]}+1)_{\n_{[1]}}}{(\alpha_1+1+n_1+\beta)_{\tilde{n}}}\sum\limits_{k=0}^{\tilde{n}}\frac{(-n-\beta)_k(-t)_{k}(\alpha_1+1+n_1+\beta+t)_{\tilde{n}-k}}{(-1)^kk!}{}_{r}F_{r-1}\!\left(\begin{matrix}-k,\boldsymbol{\alpha}_{[1]}+1+\n_{[1]}\\\boldsymbol{\alpha}_{[1]}+1\end{matrix}\right).
	$$
	In the case $r=2$ of two weights,  $\boldsymbol{\alpha}=(\alpha_1,\alpha_2)$ and the hypergeometric function in the summand becomes 
	$$
	{}_{2}F_{1}\!\left(\begin{matrix}-k,\alpha_2+1+n_2\\\alpha_2+1\end{matrix}\right)=\frac{(-n_2)_k}{(\alpha_2+1)_k}
	$$
	by Chu-Vandermonde's identity. Hence, in this case, in view of $(\gamma)_{n-k}=(-1)^k(\gamma)_{n}/(1-\gamma-n)_{k}$, we have
	$$
	\hat{R}_{n_2}(t)=\frac{(\alpha_2+1)_{n_2}(\alpha_1+1+n_1+\beta+t)_{n_2}}{(\alpha_1+1+n_1+\beta)_{n_2}}{}_{3}F_{2}\!\left(\begin{matrix}-n_2,-t,-n-\beta\\-\alpha_1-n-\beta-t,\alpha_2+1\end{matrix}\right).
	$$
	Further by an application of Sheppard's transformation \cite[Appendix (II)]{RJRJR1992} this can be transformed into
	$$
	\hat{R}_{n_2}(t)=\frac{(-1)^{n_2}(\alpha_2+1)_{n_2}(\alpha_2-\alpha_1)_{n_2}}{(\alpha_1+1+n_1+\beta)_{n_2}}{}_{3}F_{2}\!\left(\begin{matrix}-n_2,\alpha_2+1+t,n+\alpha_2+\beta+1\\\alpha_2+1,\alpha_2+1-\alpha_1\end{matrix}\right).
	$$
	If $\alpha_1=\alpha_2+N+1$, where $N\ge{n_2}$ is integer, the ${}_3F_2$ polynomial on the right hand side can be identified with Hahn's discrete orthogonal polynomial $Q_{n_2}(-t-\alpha_2-1,\alpha_2,\beta+n_1;N)$ \cite[(18.20.5)]{NIST}.  These polynomials are orthogonal with respect to a positive discrete weight under conditions $\alpha_2,\beta+n_1>-1$ or $\alpha_2,\beta+n_1<-N$. It follows from the general theory of orthogonal polynomials that under these conditions all zeros of $\hat{R}_{n_2}(t)$ are real and lie in the interval $-\alpha_1<t<-\alpha_2-1$.

	We can also work in the opposite direction. Start with the function 
\begin{equation}\label{eq:JPcompanion}
	F\left(\!\!\!\begin{array}{c}
		\alpha_1+n_1+\beta+1, \alpha_1+n_1+1 \\
		\alpha_1+1
	\end{array} \bigg\vert P_{\tilde{n}}\bigg\vert\: y\right),
	\end{equation}
	where $P_{\tilde{n}}$ is a polynomial of degree $\tilde{n}$ with known factorization 
	$$
	P_{\tilde{n}}(t)=(t+\eta_1)\cdots(t+\eta_{\tilde{n}})
	$$  
satisfying the conditions of Theorem~\ref{th:negative_zeros}  with $a=\alpha_1+n_1+\beta+1$.    Applying transformation \eqref{eq:MP1general} with $\delta=\alpha_1+n_1+1$, $\epsilon=\alpha_1+n_1+1$ and $\rho=\alpha_1+1$  we will get:
	$$
	(1-y)^{-\alpha_1-n_1-a}
	F\left(\!\!\!\begin{array}{c} -\beta-n,\alpha_1+n_1+1
\\
		\alpha_1+1
	\end{array} \bigg\vert F_{\tilde{n}}\bigg\vert\: \frac{y}{y-1}\right),
	$$
	where 
	$$
	F_{\tilde{n}}(t)=\big[T_{\tilde{n}}(\alpha_1+n_1+\beta+1;\alpha_1+1)P_{\tilde{n}}\big](t)=A(t+\gamma_1)\cdots(t+\gamma_{\tilde{n}}).
	$$
	Define $\boldsymbol{\gamma}=(\alpha_1+1,\ldots,\alpha_1+n_1-1,\gamma_1,\ldots,\gamma_{\tilde{n}})$.
Then we obtain 	
$$
(1-x)^{\beta}F\left(\!\!\!\begin{array}{c} -\beta-n,\alpha_1+n_1+1
	\\
	\alpha_1+1
\end{array} \bigg\vert F_{\tilde{n}}\bigg\vert\: x\right)=
\frac{ \left(n+\boldsymbol{\gamma}+\beta+1\right)_{\n}}{(-1)^{n}\left(\boldsymbol{\gamma}+1\right)_{\n}}
P_{\n}^{(\boldsymbol{\gamma},\beta)}(x),
	$$
and this implies that all zeros of $P_{\n}^{(\boldsymbol{\gamma},\beta)}(x)$ lie in $(0,1)$ once the function \eqref{eq:JPcompanion} satisfies the hypothesis of Theorem~\ref{th:negative_zeros}.	

\subsection{Connection with \texorpdfstring{$d$}{d}-Narayana polynomials}

There are many ways to express $d$-Narayana polynomial as a particular Jacobi-Pi\~neiro polynomial. According to definition \eqref{eq:JacobiPineiro} Jacobi-Pi\~neiro polynomials are essentially the same as generalized $f$-Eulerian polynomials \eqref{eq:fa-Euler}. This implies that every hypergeometric expression for $d$-Narayana polynomial can be viewed as  a special case of the Jacobi-Pi\~neiro polynomial. For instance, comparing \eqref{eq:CiglerF} with \eqref{eq:JacobiPineiro} we can write
$$
N_{d,m}(x)=
\frac{ \left(n+\boldsymbol{\alpha}+\beta+1\right)_{\n}}{(-1)^{n}\left(\boldsymbol{\alpha}+1\right)_{\n}}P_{\n}^{(\boldsymbol{\alpha}, \beta)}(x),
$$
where $\boldsymbol{\alpha}=(1,2,\ldots,d-1)$,
$\n=(m,\ldots,m)$, $\beta=-md-1$.  This formula, however, is not of particular interest, since parameter the  $\beta$ depends on $m$.   Note that $\beta$ is equal to the  parametric excess (i.e. the sum of the bottom parameters minus the sum of the top parameters) of the ${}_{r+1}F_{r}$ function on the right hand side of \eqref{eq:JacobiPineiro}.  
We can get $\beta$ independent of $m$ by employing various representations for  $N_{d,m}(y/(y-1))$.  For instance, by the first equality in \eqref{eq:NBernstein1}, \eqref{eq:MP1poly_reduced}, \eqref{eq:JacobiPineiro} and \eqref{eq:HM}, we obtain
$$
N_{d,m}\bigg(\frac{y}{y-1}\bigg)=
\frac{1}{(1-y)^{K}}F\!\left(\begin{matrix}-K,m+d\\2\end{matrix}\,\bigg\vert\,Q_{L}\,\bigg\vert\, y\right)=\frac{(-1)^{n}\left(n+\boldsymbol{\alpha}+\beta+1\right)_{\n}}{(1-y)^{M}\left(\boldsymbol{\alpha}+1\right)_{\n}}P_{\n}^{(\boldsymbol{\alpha}, \beta)}(y)
$$	
with $M=(d-1)m$, $K=(d-1)(m-1)$, $L=(d-2)(m-1)$, $\beta=1-d$,
$$
Q_{L}(t)=\big[T_{L}(m+1; 2)H_L\big](t)=\frac{\big[T_{L}(m+1; 2)(3+\cdot)_{m-1}\cdots(d+\cdot)_{m-1}\big](t)}{(3)_{m-1}\cdots(d)_{m-1}}
$$ 
and $\alpha_1=1$, $n_1=m+d-2$, $\alpha_2=\eta_1-1,\ldots,\alpha_{L+1}=\eta_L-1$, $n_2=\cdots=n_{L+1}=1$, where $\etta$ are the roots of  $Q_{L}(-t)$. The number of parameters $\alpha_j$ (i.e. the number of orthogonality weights for $P_{\n}^{(\boldsymbol{\alpha}, \beta)}(y)$) unfortunately depends on $d$ and $m$ except for the case $d=2$ established in \cite{KMFS}.
    
Alternatively, combining formula \eqref{eq:connectionforNarayana} with the definitions \eqref{eq:fa-Euler1} of $f$-Eulerian and  \eqref{eq:JacobiPineiro}  of Jacobi-Pi\~neiro polynomials yields: 
$$
N_{d,m}\bigg(\frac{y}{y-1}\bigg)=\frac{(-1)^{n}\left(K+\boldsymbol{\alpha}+\beta+1\right)_{\n}}{(1-y)^{K}\left(\boldsymbol{\alpha}+1\right)_{\n}}P_{\n}^{(\boldsymbol{\alpha}, \beta)}(y),
$$
where $\beta=d-1$,  $\alpha_1=1$, $n_1=m-1$, $n_2=\cdots=n_{L+1}=1$  and $\alpha_j=\eta_{j-1}-1$, $j=2,\ldots,L+1$, where $\etta$ is the vector of $L$ roots of $\big[T_L(m+d;2)H_L\big](-t)$ with $H_L$ from \eqref{eq:HM}.

If, instead of  \eqref{eq:connectionforNarayana} we use \eqref{eq:NBern1}, the result becomes
$$
N_{d,m}\bigg(\frac{y}{y-1}\bigg)=\frac{(-1)^{n}\left(K+\boldsymbol{\alpha}+\beta+1\right)_{\n}}{(1-y)^{K}\left(\boldsymbol{\alpha}+1\right)_{\n}}P_{\n}^{(\boldsymbol{\alpha}, \beta)}(y),
$$
where $\beta=d-2$,  $\alpha_1=1$, $n_1=m$, $n_2=\cdots=n_{L}=1$  and $\alpha_j=\zeta_{j-1}-1$, $j=2,\ldots,L$, where $\zetta$ is the vector of negated $L-1$ roots of $R_{L-1}=T_{(d-2)m}(m+1;2)P_{(d-2)m}$ with $P_{(d-2)m}$ from \eqref{eq:Pd-2m}. Note, that unlike previous two examples here we obtain the polynomial with $L$ (and not $L+1$) weights.

As all zeros of $N_{d,m}(x)$ lie in $(-\infty,0)$ it follows that all zeros of $P_{\n}^{(\boldsymbol{\alpha},\beta)}(y)$ in all the above examples lie in $(0,1)$. At the same time, one can verify  numerically that some of the parameters $\boldsymbol{\alpha}$ are complex for sufficiently large $m$ (for $d=5$ and $m\ge5$ in the ultimate example), so that the location of zeros does not follow from orthogonality.

\end{document}